\renewcommand\l@subsection{\@tocline{2}{0pt}{2pc}{5pc}{}}
\newcounter{mtheorem} 
\newcounter{mcorollary} 
\newtheorem{theorem}{Theorem}
\newtheorem{mtheorem}[mtheorem]{Theorem} 
\newtheorem{definition}[theorem]{Definition}
\newtheorem{remark}[theorem]{Remark}
\newtheorem*{remark*}{Remark}
\newtheorem{mcorollary}[mcorollary]{Corollary}
\newtheorem{proposition}[theorem]{Proposition}
\newtheorem{lemma}[theorem]{Lemma}
\newtheorem*{klemma*}{Key Lemma}
\newtheorem{example}[theorem]{Example}
\newlength{\dhatheight} 
\newcommand{\doublehat}[1]{%
    \settoheight{\dhatheight}{\ensuremath{\hat{#1}}}%
    \addtolength{\dhatheight}{-0.35ex}%
    \hat{\vphantom{\rule{1pt}{\dhatheight}}%
    \smash{\hat{#1}}}}
\newcommand{\no}{\noindent}
\newcommand{\bex}{\begin{example}\em}
\newcommand{\eex}{\end{example}}
\def\R{\mathbb{R}}
\def\Z{\mathbb{Z}}
\numberwithin{equation}{section} 
\numberwithin{theorem}{section}
\title[Volume-preserving  vector fields and finite type invariants]{On volume-preserving  vector fields and\\ finite type invariants of knots.} 
\date{\today}
\author{R. Komendarczyk}\thanks{The first author acknowledges the support of DARPA YFA N66001-11-1-4132 and NSF DMS grant \#1043009.}
\address{
Tulane University,
New Orleans, Louisiana 70118 } 
\email{rako@tulane.edu}
\author{I. Voli{\'c}}\thanks{The second author acknowledges the support of NSF DMS grant \#1205786.}
\address{
Wellesley College, 
Wellesley, MA 02481-8203} 
 \email{ivolic@wellesley.edu} 
\subjclass[2010]{Primary: 57M25; Secondary: 37C50, 76W05, 37C15}	 	
\keywords{volume-preserving fields, asymptotic invariants, configuration space integrals, finite type invariants}
\begin{document}
\setcounter{section}{0}

\begin{abstract}
We consider the general nonvanishing, divergence-free  vector fields defined on a domain in  $3$-space and tangent to its boundary. Based on the theory of finite type invariants, we define a family of invariants for such fields, in the style of Arnold's asymptotic linking number. Our approach is based on the configuration space integrals due to Bott and Taubes.
\end{abstract}

\maketitle

\tableofcontents


\section{Introduction}\label{sec:intro}
Suppose we have a volume-preserving vector field $X$ defined in some compact domain $\mathcal{S}$ of $\R^3$ and tangent to its boundary. In the ideal hydrodynamics or magnetohydrodynamics (MHD), c.f.~\cite{Arnold-Khesin-book} for a comprehensive reference, $X$ plays a role of a vorticity field or a magnetic field. Euler equations (in the ideal hydrodynamics or the ideal MHD) tell us that the flow $\phi_X$ of $X$ evolves in time under volume-preserving deformations. Therefore, quantities associated with $\phi_X$ that are invariant under such deformations are of particular interest to these areas of research.

The best known such invariant is the {\em helicity of $X$}, which we will denote by $\mathscr{H}(X)$. It was first discovered by Woltjer in \cite{Woltjer58}.  Its topological nature, i.e.~the connection to the linking number of a pair of closed curves in space, was first observed in the work of Moffatt \cite{Moffatt69} and then fully described by Arnold in \cite{Arnold86}. This paper concerns the existence and properties of other invariants of volume-preserving fields derived in the style of Arnold from the finite type (or Vassiliev) invariants of knots and links \cite{Vassiliev92, Bar-Natan95, Altschuler-Freidel95, Volic07} (see also questions in \cite[Problem 1990--16]{Arnold-Problems} and \cite[p.~176]{Arnold-Khesin-book}).

In more detail, and following the general idea of \cite{Arnold86}, recall that a long piece of an orbit 
 $\mathscr{O}_T(x)$  of a vector field $X$ through $x\in \mathcal{S}$ for time $T$ (or a collection of orbits through different points in $\mathcal{S}$) can be made into a knot (link) by adding a ``short arc'' (or as many short arcs as there are orbits) $\sigma(x,y)$ connecting its endpoints, i.e.
\begin{equation}\label{eq:O-orbits}
 \bar{\mathscr{O}}_T(x)=\mathscr{O}_T(x)\cup \sigma(x,y),\ \ \text{where} \ \ y=\mathscr{O}_T(x)(T). 
\end{equation}
\no Thus for any $T>0$ we obtain a family of knots $\{\bar{\mathscr{O}}_T(x)\}_{x\in\mathcal{S}}$.  Now let $\mathcal{K}$ be the space of knots (the set of embeddings of $S^1$ in $\R^3$ endowed with the $\mathcal{C}^\infty$ topology) and let 
\[
\mathscr{F}\colon \mathcal{K}\longrightarrow\R	 	
\]
be a function, typically a knot invariant. This function can be restricted to the family $\{\bar{\mathscr{O}}_T(x)\}_{x\in\mathcal{S}}$,  resulting in a function 	 	
\begin{align*}	 	
\lambda_{\mathcal{S},T} \colon \mathcal{S} & \longrightarrow \R \\ 	 	
x & \longmapsto \mathscr{F}(\overline{\mathscr{O}}_T(x)).	 	
\end{align*}	 	
This is a prototype for an invariant of $\phi_X$ under smooth isotopies via diffeomorphisms isotopic to the identity. In order to produce an actual numerical invariant of $\phi_X$, and consequently of $X$, we need to remove the dependence on short arcs. For that reason, for some $m>0$ (usually an integer), one considers the limit 
\begin{equation}\label{eq:asymp-value}
\mathscr{F}^m(X)=\lim_{T\to \infty} \int_\mathcal{S} \frac{1}{T^m}\lambda_{\mathcal{S},T}(x) 
\end{equation}
We will call $\mathscr{F}^m(X)$ the {\em asymptotic value of $\mathscr{F}$ along the flow of $X$ (of order $m$)}. Whenever the order $m$ is specified, we may denote $\mathscr{F}^m(X)$ simply by $\mathscr{F}(X)$. If $\mathscr{F}$ is a knot invariant, this usually gives an invariant of $X$ under volume-preserving deformations. In this case, we will refer to $\mathscr{F}(X)$ as an {\em asymptotic invariant of $X$ (of order $m$)}. 

Replacing a single orbit $\overline{\mathscr{O}}_T(x)$ by a collection of $n$ orbits $\{\overline{\mathscr{O}}_T(x_1),\cdots,\overline{\mathscr{O}}_T(x_n)\}$ at distinct points $x_1$,$\cdots$, $x_n$ of $\mathcal{S}$, the above philosophy can be applied to an invariant $\mathscr{F}\colon \mathcal L_n\to \R$, where $\mathcal L_n$ is the space of $n$-component links (defined and topologized analogously to $\mathcal K$). 
  
Arnold showed in \cite{Arnold86} that this technique gives, in the case when $\mathscr{F}$ is the the linking number $\operatorname{lk}$ of pairs of orbits $\{\mathscr{O}(x),\mathscr{O}(y)\}$, a well defined invariant $\mathscr{H}(X)$ which equals the above mentioned Woltjer's helicity. Namely, given a divergence-free field $X$ on $\mathcal{S}$, we have
\begin{equation}\label{eq:helicity}
 \mathscr{H}(X)=\int_{\mathcal{S}\times \mathcal{S}} \Bigr(\lim_{T\to \infty} \frac{1}{T^2} \text{lk}(\bar{\mathscr{O}}_T(x),\bar{\mathscr{O}}_T(y))\Bigl)\, \mu(x)\times \mu(y),  
\end{equation}
where $\mu$ is a volume form on $\R^3$, and the function under the integral is a well-defined $\mu$ almost everywhere integrable function on $\mathcal{S}$. Arnold called $\mathscr{H}(X)$ the average {\em asymptotic linking number} of $X$ and showed that $\mathscr{H}(X)$ is invariant under the volume-preserving deformations of $X$. 

More precisely, let  $\operatorname{Vect}(\mathcal{S},\mu)$ be the Lie algebra of smooth volume-preserving vector fields on $\mathcal{S}\subset \R^3$ equipped with a volume form $\mu$.  Consider the action by the group of smooth volume-preserving diffeomorphisms of $\R^3$ (isotopic to the identity),  $\text{Diff}_0(\R^3,\mu)$:
\begin{align}
\text{Diff}_0(\R^3,\mu)\times \operatorname{Vect}(\mathcal{S},\mu)& \longrightarrow \operatorname{Vect}(g(\mathcal{S}),\mu) \label{eq:SDiff-action} \\
(g, X) & \longmapsto g_\ast X, \notag	 	
\end{align}
where $g_\ast$ stands for the pushforward of the vector field $X$ by the diffeomorphism $g$. Then invariance under the volume-preserving deformations means the invariance under the above action. In other words,
\begin{equation}\label{eq:helicity-invariance}
 \mathscr{H}(X)=\mathscr{H}(g_\ast X).
\end{equation}
\begin{remark*}
{\rm
Observe that $g_\ast X(x)=\frac{d}{dt} g\circ\phi_X(t,g^{-1}(x))\bigl|_{t=0}$.  Thus on the level of flows, the action in \eqref{eq:SDiff-action} maps the flow $\phi_X=\phi_X(t,x)$ of $X$ to the flow $g\circ\phi_X\circ g^{-1}=g\circ \phi_X(t,g^{-1}(x))$ of $g_\ast X$, i.e.
 \begin{equation}\label{eq:flow-conj}
 \phi_X\longrightarrow g\circ\phi_X\circ g^{-1}.
\end{equation}
}
\end{remark*}
In order to state our main results we first need to provide some general information about finite type invariants, leaving further details for Section \ref{sec:conf-integrals} (or see, for example, \cite{Volic07} for a more detailed reference). The basic object in the theory of these invariants is a graded algebra (over any ring, but for us, this will be $\R$) of trivalent diagrams (see Figure \ref{fig:t-diag}) which we will denote by $\mathcal D$.  The subspace of diagrams of degree $n$ consists of those diagrams with $2n$ vertices and is denoted by $\mathcal{D}_n$, where  $k=k(D)$ vertices are on the circle ({\em circle vertices}), and $s=s(D)$ vertices are off the circle ({\em free vertices}).
 Then $\mathcal D$ is the direct sum of $\mathcal{D}_n$ for all $n\geq 1$. 
For each diagram $D\in \mathcal{D}$, we may construct a function on a knot space $\mathcal{K}$ by means of configuration space integrals, denoted as
\begin{equation}\label{eq:I_D}
I_D:\mathcal K\longrightarrow \R\ .
\end{equation}
\begin{figure}[htbp]
	\centering
		\includegraphics[scale=1.1]{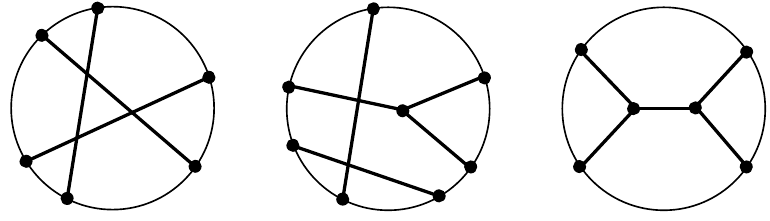}
	\caption{Examples of trivalent diagrams (without labels or edge orientations). The middle diagram is of degree four, while the other two are of degree three.}\label{fig:t-diag} 
\end{figure}
Details about the map $I_D$ are given in Section \ref{sec:conf-integrals}. 

Both $\mathcal D$ and its dual, $\mathcal{W}=\mathcal D^\ast$, called the space of {\em weight systems}, are Hopf algebras.  More formally, any $W\in \mathcal{W}$ is a finite linear combination of diagrams in $\mathcal{D}$. Finite type invariants of knots\footnote{The set up for links is analogous.} are indexed by the subspace of {\em primitive weight systems}, and this is the content of the {\em fundamental theorem of finite type invariants}, originally due to Kontsevich \cite{Kontsevich93}. An alternative proof of this is due to Altschuler and Freidel \cite{Altschuler-Freidel95}, where  the finite type $n$ invariant 	 	
$V_W\colon \mathcal{K}\longrightarrow \R$ associated with the primitive weight system 	 	
\begin{equation}\label{eq:weightsystemsum}	 	
W=\sum_{D\in TD_n} a_{D} D\in\mathcal{W}, \quad a_{D}\in \R,	 	
\end{equation}	 	
is a finite linear combination of functions in \eqref{eq:I_D}:
\begin{equation}\label{eq:V_W-sum}
 V_W=\sum_{D\in TD_n} a_{D}\,I_{D}+b\,I_{D_1},\qquad a_{D},b\in \R.
\end{equation}
\no Here $D_1=\includegraphics[scale=.07]{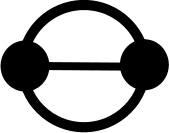}$, and $TD_n$ denotes the set of trivalent diagrams generating $\mathcal{D}$.  For a more precise statement, see Theorem \ref{thm:vassiliev}.
Let us denote the part of the sum $W$ corresponding to diagrams with $k$ vertices on the circle by $W^{k}$.  Thus if $W$ is a degree $n$ weight system, we have $W=\sum^{2n}_{k=1} W^k$, with the top part of $W$ being $W^{2n}$; this corresponds to diagrams all of whose vertices are on the circle (such diagrams are called  \emph{chord diagrams}).  We can then also clearly write 
\begin{equation}\label{eq:V_W-sum-W^k}
V_W=\sum^{2n}_{k=1} V_{W^k}+V_{D_1}.
\end{equation}
We are now ready to state our main result.  
\begin{mtheorem}\label{thm:main-knots}
 Let $X$ be a volume-preserving nonvanishing vector field on a compact domain $\mathcal{S}\subset \R^3$, tangent to the boundary. We then have: 
\begin{itemize}
\item[$(i)$] For any diagram $D\in \mathcal{D}$ of degree $n$, the asymptotic value $\mathcal{I}^{k}_D(X)$, $k=k(D)$ of $I_D$ along the flow of $X$ exists. 
\item[$(ii)$] For any invariant $V_W$ of type $n$, the asymptotic invariant $\mathscr{V}_W(X)$ of order $2n$ exists and equals the asymptotic value $\mathscr{V}^{2n}_W(X)$ of $V_{W^{2n}}$ along the flow $X$. 
\item[$(iii)$] $\mathscr{V}_W(X)$ is invariant under the action by volume preserving diffeomorphisms isotopic to the identity.
\end{itemize} 
\end{mtheorem}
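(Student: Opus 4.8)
The plan is to read parts $(i)$ and $(ii)$ as precise statements about the rate of growth of the configuration space integrals along longer and longer pieces of orbits, and then to deduce part $(iii)$ from the fact that $V_W$ is a genuine isotopy invariant together with the volume-preserving nature of the group action in \eqref{eq:flow-conj}. For part $(i)$ I would first parametrize the $k=k(D)$ circle vertices of $D$ by the time along the orbit, so that each circle vertex $\phi_X(t_i,x)$ contributes an integration over $t_i\in[0,T]$, while the $s=s(D)$ free vertices contribute integrations over a fixed compact neighborhood of $\mathcal{S}$ in $\R^3$. The integrand is the product over the edges of $D$ of the pulled-back area forms on $S^2$ (the propagators), whose analytic properties I would import from Section \ref{sec:conf-integrals}: these forms are smooth off the diagonals and have only integrable singularities along them, so that carrying out the free-vertex integrations first produces a bounded, integrable function of the circle-vertex data alone. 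This reduces the question to the convergence of a $k$-fold time average of a bounded function along the flow. Since $X$ is volume preserving, I would invoke a Birkhoff/Arnold-type ergodic argument — exactly as in Arnold's treatment of \eqref{eq:helicity} — to conclude that $\tfrac{1}{T^{k}}I_D(\bar{\mathscr{O}}_T(x))$ converges for $\mu$-almost every $x$ and, after integration against $\mu$, yields the finite asymptotic value $\mathcal{I}^k_D(X)$. The contributions of the short closing arcs involve at least one vertex off the long orbit and are therefore $O(T^{k-1})$, so they disappear after division by $T^{k}$.

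For part $(ii)$ I would use the decomposition \eqref{eq:V_W-sum-W^k}. By part $(i)$, each summand $V_{W^{j}}$ grows at most like $T^{j}$, where $j$ is the number of circle vertices of the diagrams occurring in $W^{j}$, and the writhe correction $V_{D_1}$ is of order at most $2$. Dividing $V_W$ by $T^{2n}$ therefore annihilates every term except the top part $W^{2n}$, whose diagrams are chord diagrams with all $2n$ vertices on the circle; these are precisely the terms of order $2n$ (and for $n=1$ the statement reduces to the helicity situation already covered by \eqref{eq:helicity}). Passing to the limit then gives $\mathscr{V}_W(X)=\mathscr{V}^{2n}_W(X)$, with existence guaranteed by part $(i)$. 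The only point requiring care is the \emph{uniformity} in $x$ of the $O(T^{j})$ bounds, which is what licenses interchanging the limit $T\to\infty$ with the integral over $\mathcal{S}$.

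Finally, for part $(iii)$, let $g\in\operatorname{Diff}_0(\R^3,\mu)$. From \eqref{eq:flow-conj} the flow of $g_\ast X$ is $g\circ\phi_X\circ g^{-1}$, so the orbit of $g_\ast X$ through $g(x)$ is the image $g(\mathscr{O}_T(x))$. Because $g$ is isotopic to the identity, the closed curve $g(\bar{\mathscr{O}}_T(x))$ is ambient isotopic to $\bar{\mathscr{O}}_T(x)$, and since $V_W$ is an isotopy invariant we obtain $V_W(g(\bar{\mathscr{O}}_T(x)))=V_W(\bar{\mathscr{O}}_T(x))$. The two closings — the image $g(\sigma)$ of the short arc versus the short arc chosen for $g_\ast X$ — differ only on a bounded arc, so by the estimate from part $(i)$ this discrepancy is $O(T^{2n-1})$ and washes out. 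Combining this with the change of variables $x\mapsto g(x)$, which preserves $\mu$ exactly because $g$ is volume preserving, gives
\begin{equation*}
\int_{g(\mathcal{S})}\frac{1}{T^{2n}}\,V_W\!\left(\bar{\mathscr{O}}^{\,g_\ast X}_T(x')\right)\mu(x')=\int_{\mathcal{S}}\frac{1}{T^{2n}}\,V_W\!\left(\bar{\mathscr{O}}^{\,X}_T(x)\right)\mu(x)+o(1),
\end{equation*}
and letting $T\to\infty$ yields $\mathscr{V}_W(g_\ast X)=\mathscr{V}_W(X)$.

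I expect the genuine difficulty to reside entirely in part $(i)$, namely pinning down the exact order of growth $T^{k}$ and the convergence of the normalized integrals for the higher-degree diagrams. In contrast to Arnold's linking-number case, where a single propagator is integrated over two orbits, here one must simultaneously control a \emph{product} of propagators with coincident-point singularities together with several coupled time-integrations, and establish both the almost-everywhere convergence of the ergodic averages and the integrability needed to pass the limit inside $\int_{\mathcal{S}}$. Once part $(i)$ is in place, parts $(ii)$ and $(iii)$ are comparatively formal consequences.
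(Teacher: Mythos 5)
Your outline of parts $(ii)$ and $(iii)$ matches the paper's proof in structure, and you correctly locate the real difficulty in part $(i)$ --- but your central analytic claim there is false, and it is precisely the point where the paper's work happens. After integrating out the free vertices one does obtain a form $\varpi_D$ that is smooth on the compactification $C[k;\R^3]$, but the function actually being time-averaged, $f_{D,X}=\alpha^\ast\varpi_D(X,\cdots,X)$ of \eqref{eq:f_D,X}, is \emph{not} bounded on the circle-vertex data: it blows up along the diagonals of $\mathcal{S}^k$ (already for a single chord, $\omega_{1,2}(X(x_1),X(x_2))\sim |x_1-x_2|^{-2}$), and since all $k$ configuration points run along the \emph{same} orbit, the configuration enters every neighborhood of the diagonal over and over as $T\to\infty$. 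Nor does ``integrable singularity'' in the Lebesgue sense rescue the argument: the measure against which integrability is needed is the averaged diagonal measure $\bar{\mu}_\Delta$ of \eqref{eq:bar-mu-delta}, which is supported exactly on the leaves of $\mathscr{F}^k_X$ through the thin diagonal --- a Lebesgue-null set containing the singular locus --- so integrability of $f_{D,X}$ with respect to $\mu^k$ says nothing about $\int_{\mathcal{S}^k} f_{D,X}\,d\bar{\mu}_\Delta$. The paper's Key Lemma is devoted to exactly this: $\bar{\mu}_\Delta$ is disintegrated, via the Ruelle--Sullivan correspondence, into leafwise length measure against a holonomy-invariant transverse measure as in \eqref{eq:lambda_D-current}, and the leafwise integrals $F_{D,\pmb\alpha}$ are then bounded by rewriting them as integrals of the smooth form $\varpi_D$ over \emph{lifts} of flowbox segments to the blowup and bounding the volumes of those lifts uniformly in the cylindrical-end metric of Section \ref{sec:blowups} (Lemma \ref{lem:A-estimate}). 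The ``infinite stretching'' of the lifted frame $\widetilde{X}$ near the boundary of $C[k;\mathcal{S}]$ (Remark \ref{rem:lemma-illustration}) is the precise phenomenon your boundedness claim overlooks.

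A second, smaller gap: even granted integrability, a ``Birkhoff/Arnold-type'' theorem for the single flow $\phi_X$ does not yield existence of the $k$-parameter limit $\frac{1}{T^k}\int_0^T\cdots\int_0^T f_{D,X}(\phi(x,t_1),\cdots,\phi(x,t_k))\,d\mathbf{t}$; the paper invokes the multiparameter (Wiener-type) ergodic theorem of Becker and Tempelman for the $\R^k$-action $\phi^k_X$ on $(\mathcal{S}^k,\bar{\mu}_\Delta)$, with $f_{D,X}\in L^1(\bar{\mu}_\Delta)$ as the hypothesis supplied by the Key Lemma; its $L^1$-convergence also disposes of your worry about interchanging $\lim_{T\to\infty}$ with $\int_{\mathcal{S}}$, with no uniform-in-$x$ bounds needed. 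Your short-arc counting (at least one vertex on a bounded arc, hence $O(T^{k-1})$) is the correct skeleton of Lemma \ref{lem:short-paths}, but it too rests on uniform bounds for the elementary flowbox-piece integrals --- the short arc's endpoints lie on the orbit, so arc--orbit configurations also approach the diagonal --- and these bounds come from the same blowup volume estimates, not from boundedness of the integrand. Part $(iii)$ is essentially the paper's argument, except that the paper transports the short-path system by $h$ so that $V_W(K_{h_\ast X})=V_W(K_X)$ holds exactly rather than up to an $O(T^{2n-1})$ error; once part $(i)$ is repaired as above, your parts $(ii)$ and $(iii)$ go through as you describe.
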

\no Note that, in part $(i)$, $\mathcal{I}^{k}_D(X)$ is not necessarily an invariant because $I_D$ is not one. Further, we may consider a situation where $\mathscr{V}_W(X)=\mathscr{V}^{2n}_W(X)=0$ and see if the lower order averages of $V_W$ exist. For instance, if the asymptotic value $\mathscr{V}^{2n-1}_W(X)$ exists, it may provide a lower order asymptotic invariant 
of $X$. Inductively, if $\mathscr{V}^{j}_W(X)=0$ for $k<j\leq 2n-1$, we may ask if $\mathscr{V}^{k}_W(X)$ defines an invariant of a lower order (in the sense of definition following \eqref{eq:asymp-value}). While we do not answer this question in full generality we obtain the following direct consequence of $(i)$ in Theorem \ref{thm:main-knots} and \eqref{eq:V_W-sum-W^k}.
\begin{mcorollary}\label{cor:main-cor}
Consider a primitive weight system $W$ and suppose for a given $k$ ($k<n$), we have $W^k\neq 0$. Suppose also that the asymptotic value $\mathscr{V}^{j}_{W}(X)$ of $W$ vanishes for every $k<j\leq 2n-1$ as does the asymptotic value $\mathscr{V}^{k}_{W^{k+1}}(X)$. Then the asymptotic invariant $\mathscr{V}_W(X)$ of order $k$ exists and equals the asymptotic value $\mathscr{V}^{k}_{W^{k}}(X)$ of $V_{W^{k}}$ along the flow $X$.
\end{mcorollary}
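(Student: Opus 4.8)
The plan is to reduce everything to the linear decomposition \eqref{eq:V_W-sum-W^k} and then to read off the coefficient of $T^{k}$ in the large-$T$ expansion of $\int_{\mathcal{S}}V_W(\overline{\mathscr{O}}_T(x))$. The input I would take from Theorem \ref{thm:main-knots}$(i)$ is not merely the existence of the individual leading values, but the accompanying structural fact produced by the short-arc analysis in its proof: for a part $V_{W^j}$ (all of whose diagrams have $j$ circle vertices) the quantity $\int_{\mathcal{S}}V_{W^j}(\overline{\mathscr{O}}_T(x))$ admits a polynomial-in-$T$ asymptotic expansion of degree $j$,
\[
\int_{\mathcal{S}}V_{W^j}(\overline{\mathscr{O}}_T(x))=\sum_{m\le j} c^{(j)}_m\,T^{m}+o(1),\qquad c^{(j)}_j=\mathscr{V}^{j}_{W^j}(X),
\]
in which the coefficient $c^{(j)}_m$ is exactly the order-$m$ asymptotic value of $V_{W^j}$ and is governed by configurations in which $j-m$ of the circle points have migrated onto the short closing arc. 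Summing over $j$ and adjoining the bounded-order contribution of $V_{D_1}$ gives, by \eqref{eq:V_W-sum-W^k}, an expansion $\int_{\mathcal{S}}V_W(\overline{\mathscr{O}}_T(x))=\sum_m C_m T^m+o(1)$ with $C_m=\sum_{j\ge m}c^{(j)}_m+c^{(D_1)}_m$, and the order-$k$ asymptotic invariant we seek is precisely $\lim_{T\to\infty}T^{-k}\int_{\mathcal{S}}V_W(\overline{\mathscr{O}}_T(x))$.

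First I would settle existence and dispose of the low-order parts. Since the expansion is polynomial, this limit exists as soon as $C_m=0$ for every $m>k$; working down from the top, the hypotheses $\mathscr{V}^{m}_W(X)=0$ for $k<m\le 2n-1$, together with the standing assumption $\mathscr{V}^{2n}_W(X)=0$ of this discussion, force $C_m=0$ for all $m$ with $k<m\le 2n$. Hence the order-$k$ invariant exists and equals $C_k$. In $C_k=\sum_{j\ge k}c^{(j)}_k+c^{(D_1)}_k$ the parts with $j<k$ never reach order $k$ and drop out, as does $V_{D_1}$ once $k\ge 3$ (its expansion has degree two), while the term $j=k$ contributes exactly $c^{(k)}_k=\mathscr{V}^{k}_{W^k}(X)$, the asserted value.

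It remains to show that the contributions $c^{(j)}_k$ of the parts with $j>k$ all vanish, and this is where the two hypotheses on the lower-order values do their work. The adjacent part $j=k+1$ contributes $c^{(k+1)}_k$, which by definition is the order-$k$ asymptotic value $\mathscr{V}^{k}_{W^{k+1}}(X)$; the hypothesis that this vanishes removes it (and, for it to be well defined, already forces $c^{(k+1)}_{k+1}=0$). The genuine obstacle is the remaining range $j\ge k+2$: here $c^{(j)}_k$ is controlled by configurations with at least two circle points on the short closing arc, and these are exactly the higher-codimension arc contributions that the proof of Theorem \ref{thm:main-knots}$(i)$ must already estimate --- the multi-point analogue of Arnold's observation \cite{Arnold86} that the arc correction to the asymptotic linking number is negligible after normalization. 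I expect the crux of the argument to be showing that such two-or-more-arc-point configurations contribute nothing at order $T^{k}$, so that $c^{(j)}_k=0$ for $j\ge k+2$; granting this, $C_k=\mathscr{V}^{k}_{W^k}(X)$ and the corollary follows.
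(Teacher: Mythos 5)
Your skeleton --- decompose $V_W$ via \eqref{eq:V_W-sum-W^k}, pass to normalized $T^{-m}$ averages, kill the pieces with fewer than $k$ circle vertices automatically and the higher pieces using the hypotheses --- is the same step-down argument the paper runs. But the structural input you claim to extract from the proof of Theorem \ref{thm:main-knots}$(i)$ is not available, and this is a genuine gap, not a presentational one. The Key Lemma, via the multiparameter ergodic theorem applied to the $L^1$ functions $f_{D,X}$, yields only the existence of the leading-order limit $\lim_{T\to\infty}T^{-k(D)}I_D(\bar{\mathscr{O}}_T(x))$ (a.e., with integrable limit); the remainder is merely $o(T^{k(D)})$, with no rate and no structure. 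There is no polynomial expansion $\sum_{m\le j}c^{(j)}_m T^m+o(1)$: the fluctuation of an ergodic time average around its leading term need not converge at any lower order (it can oscillate or grow like $T^{\alpha}$, $\alpha<1$), so ``the coefficient of $T^m$'' is simply not defined. This is precisely why the corollary \emph{hypothesizes} the existence-and-vanishing of the intermediate asymptotic values $\mathscr{V}^{j}_{W}(X)$: in the paper these assumptions substitute for the expansion you posit, whereas in your scheme they would be automatic consequences of it (your $C_m=0$), which inverts the logic; your existence step (``the limit exists as soon as $C_m=0$ for $m>k$'') is circular for the same reason. Your identification of $c^{(j)}_m$ with configurations having $j-m$ circle points on the short arc is likewise unsupported: Lemma \ref{lem:short-paths} provides only the upper bounds $|J_{i,j}|\le A_{X,D}\,(\lceil 1/\epsilon\rceil)^i(\lceil T/\epsilon\rceil)^{k-i}$, i.e.\ $O(T^{k-i})$, used to show arcs never affect the normalized limits --- not convergent lower-order coefficients --- and the dominant uncontrolled subleading contribution comes from the no-arc term in any case.

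Relatedly, the step you flag as the crux --- showing $c^{(j)}_k=0$ for $j\ge k+2$ from a multi-arc-point estimate --- is not something the paper proves, and cannot be, since those quantities need not exist. The paper's proof is instead a conditional descent: if $J_{2n}(\bar{\mathscr{O}}_T(x))$ is $o(T^{2n-1})$ one may pass to $T^{2n-1}$-averages, where only $J_{2n-1}$ survives by the Key Lemma, ``and this reasoning further applies if the terms $J_k$ are of lower order.'' That is, the vanishing hypotheses on $\mathscr{V}^{j}_{W}(X)$ for $k<j\le 2n-1$ and on $\mathscr{V}^{k}_{W^{k+1}}(X)$ are read as the assertions that the higher pieces are of strictly lower order at each stage of the descent; the only analytic fact ever invoked is the Key Lemma's leading-order convergence for the surviving piece $J_k$ (together with Lemma \ref{lem:short-paths} to ignore the arcs). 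To repair your argument, drop the expansion entirely, interpret each hypothesis as the statement that the corresponding normalized limit exists and vanishes, and run the descent order by order; the $j\ge k+2$ terms are then disposed of by hypothesis rather than by an estimate you still owe.
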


The meaning of lower order invariants is unclear to us at this point. However, the work in \cite{Kom-helicity09, Kom-helicity10} on asymptotic Brunnian links shows one possible setting where they might appear. 

 A closely related result to Theorem \ref{thm:main-knots} is proven in \cite{Gambaudo-Ghys01} by Gambaudo and Ghys who consider a signature invariant $\sigma\colon \mathcal{K}\longrightarrow \Z$ of knots and its asymptotic counterpart for ergodic volume-preserving fields $X$. In particular, they prove that, in the setting of ergodic fields,  the associated {\em asymptotic signature} $\sigma(X)$ is of order $2$ and satisfies 
\begin{equation}\label{eq:sing=half-helicity}
 \sigma(X)=\frac{1}{2}\mathscr{H}(X).
\end{equation}

An extension of this work on ergodic fields to other knot invariants appears more recently in the work of Baader \cite{Baader07, Baader11}. In addition, Baader and March\'e \cite{BaaderMarche12} consider asymptotic finite type invariants. The main result of \cite{BaaderMarche12} gives an analog of the identity \eqref{eq:sing=half-helicity} for any asymptotic finite type invariant $\mathscr{V}_W(X)$ of order $n$ whenever $X$ is ergodic and $W$ is degree $n$. Note that Theorem \ref{thm:main-knots} shows that $\mathscr{V}_W(X)=\mathscr{V}^{2n}_W(X)$ is well-defined for a general nonvanishing field $X$ (on a domain $\mathcal{S}$ in $\R^3$), and also indicates a possibility for lower order invariants. Our techniques also lead us to the following counterpart of a result in \cite{BaaderMarche12}.

%
\begin{mtheorem}\label{thm:main-ergodic}
 Let $\mu$ be the standard volume form on $\R^3$ and let $X$ be an ergodic $\mu$-preserving nonvanishing vector field on a domain $\mathcal{S}$. Then there exists a singular differential form $\varpi_{W,2n}$ of degree $4n$ on $\mathcal{S}^{2n}$, such that
 \begin{equation}\label{eq:T_W=helicity}
  \mathscr{V}_{W}(X)=c_{W}\,(\mathscr{H}(X))^n=\int_{\mathcal{S}^{2n}} \varpi_{W,2n}\wedge (\underbracket[0.5pt]{\iota_X\mu\times\cdots\times\iota_X\mu}_{n\text{ times}}), 
 \end{equation}
where  $c_{W}$ is a constant independent of $X$, $\iota_X\mu$ is the contraction of $X$ into the form $\mu$, and $\mathscr{H}(X)$ is the helicity defined in \eqref{eq:helicity}. Moreover, the lower order invariants (if they exist) are given as follows
\[
 \mathscr{V}^m_{W}(X)=\int_{\mathcal{S}^{2m}} \varpi_{W,m}\wedge (\underbracket[0.5pt]{\iota_X\mu\times\cdots\times\iota_X\mu}_{m\text{ times}}).
\]  
\end{mtheorem}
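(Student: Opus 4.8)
The plan is to reduce to chord diagrams, pass from a single-orbit time average to a space average on $\mathcal S^{2n}$ via the ergodic theorem, and then read off the product structure chord by chord. First I would apply Theorem~\ref{thm:main-knots}(ii) to replace $V_W$ by its top part $V_{W^{2n}}$, so that $\mathscr V_W(X)=\mathscr V^{2n}_W(X)$ and only the chord diagrams $D$ appearing in $W^{2n}$ contribute. For such a $D$---with all $2n$ vertices on the circle, joined by $n$ chords---the integral $I_D(\bar{\mathscr O}_T(x))$ is an integral over $2n$ cyclically ordered points of the orbit of a product of $n$ Gauss forms, one per chord. Substituting the orbit $\phi_X(\,\cdot\,,x)$, rescaling the $2n$ time parameters by $T$, and using the $\mu$-invariance of $\phi_X$, I would rewrite $\tfrac{1}{T^{2n}}\int_{\mathcal S} I_D(\bar{\mathscr O}_T(x))\,\mu(x)$ as an average along the orbit of the product of the chord forms, integrated against the rescaled configuration simplex $\Delta_{2n}$.

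Next I would assemble the singular form. Grouping the $n$ Gauss forms with the velocity $2$-form $\iota_X\mu$ at one endpoint of each chord gives a form $\varpi_{W,2n}$ of degree $4n$ (the $n$ Gauss forms contributing $2n$, the $n$ velocity forms another $2n$), weighted by the coefficients $a_D$; the remaining $n$ endpoints then supply the explicit factor $\iota_X\mu\times\cdots\times\iota_X\mu$. The content of \eqref{eq:T_W=helicity} is that, as $T\to\infty$, the orbit average converges to the integral $\int_{\mathcal S^{2n}}\varpi_{W,2n}\wedge(\iota_X\mu\times\cdots\times\iota_X\mu)$ against the product measure $\mu^{\times 2n}$, and that this integral factorizes over the $n$ chords. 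Since the form attached to a single chord---its Gauss form wedged with $\iota_X\mu$ at each of its two endpoints---is precisely Arnold's helicity integrand \eqref{eq:helicity}, each chord contributes a factor $\mathscr H(X)$, and the product yields $(\mathscr H(X))^n$ with a universal constant $c_W$ collecting the $a_D$ and the volume of $\Delta_{2n}$.

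The heart of the argument, and the step I expect to be the main obstacle, is justifying this decoupling: that the joint long-time average along a \emph{single} ergodic orbit of the $n$ chord forms converges to the \emph{product} of the $n$ individual helicity integrals. For one chord this is a clean application of the Birkhoff ergodic theorem, giving $\mathscr H(X)$; but for several chords the naive approach would require ergodicity of the product flow on $\mathcal S^{2n}$, which fails in general even when $X$ is ergodic. The correct argument must instead exploit the separation of the rescaled times $Ts_1<\cdots<Ts_{2n}$---after replacing $x$ by $\phi_X(Ts_1,x)$ the later points are seen at times $T(s_i-s_1)\to\infty$---and control the resulting correlations after integration over $\Delta_{2n}$, in the spirit of \cite{BaaderMarche12} and the asymptotic linking case. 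Establishing this correlation/decoupling estimate is the technical crux on which the whole theorem rests.

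Finally, the lower-order formula follows by running the identical construction with the part $W^m$ of $W$ in place of $W^{2n}$: the same grouping of Gauss forms and velocity forms produces $\varpi_{W,m}$ and the representation $\mathscr V^m_W(X)=\int_{\mathcal S^{2m}}\varpi_{W,m}\wedge(\iota_X\mu\times\cdots\times\iota_X\mu)$, valid whenever the lower-order asymptotic values exist in the sense of Corollary~\ref{cor:main-cor}.
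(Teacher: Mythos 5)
Your reduction to the top part $W^{2n}$, the chord-by-chord factorization of $\varpi_D$, and the identification of each chord's contribution with Arnold's helicity integrand all agree with the paper. But your proposal stops exactly where a proof is required: you correctly isolate the decoupling of the $n$ chord forms along a \emph{single} orbit as ``the technical crux on which the whole theorem rests,'' and then you do not supply that estimate. As written, this is an outline with its central step missing. The paper closes the gap by a different, two-step measure-theoretic argument rather than by any correlation estimate along one orbit. First, it applies the ergodic theorem to the product flow $\phi^{2n}_X$ on $(\mathcal{S}^{2n},\mu^{2n})$ at a generic point $\mathbf{x}$ \emph{off the thick diagonal}: there the $2n$ configuration points lie on $2n$ distinct orbits, the integral of $\varpi_D$ over $\bar{\mathscr{O}}_T(x_1)\times\cdots\times\bar{\mathscr{O}}_T(x_{2n})$ splits exactly (not merely asymptotically) as $\prod_{(i,j)\in\mathcal{E}(D)}\operatorname{lk}(\bar{\mathscr{O}}_T(x_i),\bar{\mathscr{O}}_T(x_j))$, and the time average converges to $c_W(\mathscr{H}(X))^n$. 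Second---and this is the device absent from your proposal---it transfers this off-diagonal computation to the single-orbit quantity $\mathscr{V}_W(X)=\int_{\mathcal{S}^{2n}} f_{W,X}\,\bar{\mu}_\Delta$ by approximating the diagonal invariant measure $\bar{\mu}_\Delta$ weakly in $\mathcal{M}(\mathcal{S}^{2n})$ by invariant measures $\bar{\mu}^{2n}_\delta$ obtained by time-averaging probability measures supported on $\delta$-neighborhoods $U_\delta$ of the thin diagonal; the Key Lemma's uniform $L^1$ bounds make $f_{W,X}$ integrable against each $\bar{\mu}^{2n}_\delta$, the a.e.\ independence of the time average on the base point lets one place $\mathbf{x}\in U_\delta$, and letting $\delta\to 0$ yields \eqref{eq:T_W=helicity}. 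Without some such diagonal-approximation step, the ergodic-theorem computation at off-diagonal points says nothing about the order-$2n$ average taken along a single orbit, which is what $\mathscr{V}_W(X)$ is.

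A further remark: your objection that ergodicity of the product flow on $\mathcal{S}^{2n}$ ``fails in general even when $X$ is ergodic'' is legitimate as a matter of ergodic theory (the self-product of an ergodic flow is ergodic only under a mixing-type hypothesis), but note that the paper's proof \emph{does} assert and use ergodicity of the $\phi^{2n}_X$-action on $(\mathcal{S}^{2n},\mu^{2n})$; the route you dismiss as naive is, with that assertion, precisely the paper's route. So to complete your proposal you must either justify the needed a.e.\ constancy of the multi-parameter time averages on $\mathcal{S}^{2n}$ (or accept the paper's ergodicity claim), or genuinely carry out the time-separation correlation estimate you defer to---and in either case you must add the approximation of $\bar{\mu}_\Delta$ by near-diagonal invariant measures. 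Your construction of $\varpi_{W,2n}$ by attaching $\iota_X\mu$ to one endpoint of each chord, and your treatment of the lower-order formula via $W^m$, are consistent with how the paper deduces the integral representations from \eqref{eq:higher-helicity}, and need no change.
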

Another avenue we explore here are applications to the {\em energy--helicity problem} as considered by Arnold in  \cite{Arnold86} (see also \cite{Arnold-Khesin-book}).  Define the (magnetic) energy of $X$ by 
\begin{equation}\label{eq:L2-energy}
 E(X)=\int_{\mathcal{S}} |X|^2 d\mu,
\end{equation}
i.e.~as the square of the $L^2$--norm of $X$. Consider the problem of minimizing the energy functional $E$ on the orbit $\mathfrak{o}_X=\{g_\ast X\ |\ g \in \text{Diff}_0(\R^3, \mu)\}$ of the action \eqref{eq:SDiff-action} through a fixed vector field $X$. 
If $\mathfrak{o}_X$ is an orbit through a general volume-preserving field $X$ there may  not be a minimizing (smooth) vector field (c.f.~\cite{Freedman99}).
Can the energy be made arbitrary small ? Arnold showed in \cite{Arnold86} that
\begin{equation}\label{eq:energy-helicity-arnold}
E(g^\ast X) \geq C |\mathscr{H}(X)|,
\end{equation}
for any $g\in \text{Diff}_0(\R^3, \mu)$ and for some positive constant $C$ which depends on the ``geometry'' (i.e.~on a choice of the Riemannian metric on $\R^3$). Since $\mathscr{H}(X)$ is invariant under the action \eqref{eq:SDiff-action}, the above inequality
gives a lower bound for the magnetic energy of $X$ along the orbit, whenever $\mathscr{H}(X)\neq 0$. 
Since the bound \eqref{eq:energy-helicity-arnold} is ineffective for vanishing $\mathscr{H}(X)$,  Freedman and He \cite{Freedman-He91} showed a sharper  bound for the $L^{3/2}$--energy\footnote{recall that $L^{2}$--energy majorizes the $L^{3/2}$--energy via the H\"{o}lder inequality.} of $X$ in terms of the {\em asymptotic crossing number}\footnote{denoted in \cite{Freedman-He91} by $c(X,X)$.} $c(X)$ of $X$:
\begin{equation}\label{eq:E-3/2-helicity-bound}
 E_{3/2}(X)\geq \Bigl(\frac{16}{\pi}\Bigr)^{1/4} c(X)^{3/4}\geq \Bigl(\frac{16}{\pi}\Bigr)^{1/4} |\mathscr{H}(X)|^{3/4}.
\end{equation} 
Asymptotic crossing number is not an invariant under the action \eqref{eq:SDiff-action}, but it leads to a topological lower bound for fluid knots, i.e.~divergence-free vector fields constrained to a tube around a knotted core curve $K$ in $3$--space. Namely, denoting by $g(K)$ the genus of $K$, the following estimate is shown in \cite{Freedman-He91}:
\begin{equation}\label{eq:E-3/2-genus-bound}
 E_{3/2}(X)\geq \Bigl(\frac{16}{\pi}\Bigr)^{1/4}\bigl(2 g(K)-1\bigr)^{3/4}\,\text{Flux}(X),
\end{equation} 
where  $\text{Flux}(X)$ is the flux of $X$ through the cross--sectional disk of the tube. 
In Section \ref{sec:q-helicity} of this paper we consider the {\em quadratic helicity} $\mathscr{H}^2(X)$ (recently proposed by Akhmetiev in \cite{Akhmetev12}). Note that $\mathscr{H}^2(X)$ is well defined, thanks to Theorem \ref{thm:main-knots} applied to the square of the linking number\footnote{$\operatorname{lk}^2$, which is the simplest finite type $2$ invariant of $2$--component links}.  Based on the estimate \eqref{eq:E-3/2-helicity-bound} we show 
\begin{mtheorem}\label{thm:q-helicity-energy} 
We have  
\begin{equation}\label{eq:E_3/2-q-helicity}
 E_{3/2}(X)\geq  \Bigl(\frac{16}{\pi}\Bigr)^{1/4} \mathscr{H}^2(X)^{3/8}\geq \Bigl(\frac{16}{\pi}\Bigr)^{1/4} |\mathscr{H}(X)|^{3/4}.
\end{equation} 
\end{mtheorem}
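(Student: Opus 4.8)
The plan is to read both inequalities off a single identification of the quadratic helicity with the $L^2$-norm of Arnold's asymptotic linking density, and then to feed that into the Freedman--He estimate \eqref{eq:E-3/2-helicity-bound}. First I would unwind $\mathscr{H}^2(X)$: since $\operatorname{lk}^2$ is a type $2$ invariant of $2$-component links, Theorem \ref{thm:main-knots}$(ii)$ (applied with $n=2$, order $2n=4$) identifies $\mathscr{H}^2(X)$ with the order-$4$ average of $\operatorname{lk}^2$, which factors because the relevant limit exists:
\begin{equation*}
\mathscr{H}^2(X)=\int_{\mathcal{S}\times\mathcal{S}}\lambda(x,y)^2\,\mu(x)\times\mu(y),\qquad \lambda(x,y)=\lim_{T\to\infty}\frac{1}{T^2}\operatorname{lk}\bigl(\bar{\mathscr{O}}_T(x),\bar{\mathscr{O}}_T(y)\bigr).
\end{equation*}
Thus $\mathscr{H}^2(X)=\|\lambda\|_{L^2}^2$, while by \eqref{eq:helicity} the ordinary helicity $\mathscr{H}(X)=\int_{\mathcal{S}^2}\lambda\,\mu(x)\times\mu(y)$ is the pairing of $\lambda$ against the constant $1$.

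The rightmost inequality is then pure Cauchy--Schwarz. Normalizing the volume so that $\mu(\mathcal{S})=1$ (reading \eqref{eq:helicity} as an average), Cauchy--Schwarz on the probability space $(\mathcal{S}^2,\mu\times\mu)$ gives $\mathscr{H}(X)^2=\bigl(\int_{\mathcal{S}^2}\lambda\bigr)^2\leq\int_{\mathcal{S}^2}\lambda^2=\mathscr{H}^2(X)$; raising to the power $3/8$ yields exactly $\mathscr{H}^2(X)^{3/8}\geq|\mathscr{H}(X)|^{3/4}$. Conceptually this is just the nonnegativity of the variance of the linking density, and in the ergodic regime of Theorem \ref{thm:main-ergodic} it degenerates to an equality (there $\lambda$ is a.e.\ constant).

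For the first inequality I would invoke the Freedman--He bound \eqref{eq:E-3/2-helicity-bound}, $E_{3/2}(X)\geq(16/\pi)^{1/4}c(X)^{3/4}$, and reduce the theorem to the key lemma
\begin{equation*}
c(X)\;\geq\;\sqrt{\mathscr{H}^2(X)}\;=\;\|\lambda\|_{L^2},
\end{equation*}
after which $c(X)^{3/4}\geq\mathscr{H}^2(X)^{3/8}$ closes the argument. This lemma is a genuine sharpening of Freedman--He's own estimate $c(X)\geq|\mathscr{H}(X)|=\|\lambda\|_{L^1}$: the asymptotic form of ``crossing number $\geq$ linking number'', namely $c(x,y)\geq|\lambda(x,y)|$, only integrates to the $L^1$-bound $c(X)=\int c(x,y)\geq\int|\lambda|=\|\lambda\|_{L^1}$, and since $\|\lambda\|_{L^1}\leq\|\lambda\|_{L^2}$ on a probability space this does not suffice to reach $\|\lambda\|_{L^2}$.

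I expect this lemma to be the main obstacle, and I would attack it by ergodic decomposition. On an ergodic component both $\lambda(x,y)$ and the asymptotic crossing density $c(x,y)$ are a.e.\ constant, so there the desired inequality collapses to the pointwise bound $c(x,y)\geq|\lambda(x,y)|$ and holds immediately; this is precisely the regime of Theorem \ref{thm:main-ergodic}. The real difficulty is the non-ergodic case, where concentration of $\lambda$ can separate its $L^1$- and $L^2$-norms. Here the crossing number cannot be used as a black box: one must reopen the coarea/minimal-surface argument underlying \eqref{eq:E-3/2-helicity-bound} and show that the accumulated \emph{unsigned} crossings of long orbits control $\|\lambda\|_{L^2}$ rather than merely $\|\lambda\|_{L^1}$. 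Establishing that refined lower bound for $c(X)$ is the step I anticipate requiring the most care.
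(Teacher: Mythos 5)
Your reduction is the same as the paper's: the second inequality is Cauchy--Schwarz for $\lambda$ on the probability space $(\mathcal{S}^2,\mu\times\mu)$ (the paper leaves this step implicit; your normalization $\mu(\mathcal{S})=1$ matches the convention adopted in the proof of Theorem \ref{thm:main-ergodic} and is indeed needed for the second inequality as stated, and your remark that it becomes an equality in the ergodic case is correct), while the first inequality rests entirely on the lemma $c(X)\geq\sqrt{\mathscr{H}^2(X)}$, which is exactly the paper's inequality \eqref{eq:H^2-leq-cross}, fed into the Freedman--He bound \eqref{eq:E-3/2-helicity-bound}. The genuine gap is that you never prove this lemma. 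You verify it only on an ergodic component --- where, by Theorem \ref{thm:main-ergodic}, $\mathscr{H}^2(X)=\mathscr{H}(X)^2$ and the whole theorem collapses to Freedman--He's own estimate --- and for the general (non-ergodic) field you propose to ``reopen the coarea/minimal-surface argument,'' a program you explicitly do not carry out. Since everything else in your proposal is a citation or a one-line application of Cauchy--Schwarz, the unproved lemma is the entire content of the theorem, and as submitted the proof is incomplete.

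For comparison, the paper never touches the internals of Freedman--He: it uses $c(X)$ as a black box through its kernel expression $c(X)=\int_{\mathcal{S}^2}|\alpha^\ast\omega_{1,2}(X,X)|\,\mu\times\mu$, and proves \eqref{eq:H^2-leq-cross} by a short measure-theoretic argument. One approximates the diagonal invariant measure $\overline{\mu}_{\Delta_{(2)}}$ in $\mathcal{M}(\mathcal{S}^4)$ by the positive combinations \eqref{eq:bar-mu-n} of flow-averaged Dirac masses at diagonal points $\mathbf{x}_i$, with \emph{quadratic} weights $a_i=b_i^2$ obtained by pushing an approximation $\sum_i b_i\,\delta_{(x_i,y_i)}$ of $\mu\times\mu$ forward under $j$; each $\overline{\mu}_{\mathbf{x}_i}$ is a product measure by \eqref{eq:bar-mu_i-prod}, whence
\[
\Bigl|\int_{\mathcal{S}^4}f_W\,\overline{\mu}_n\Bigr|=\sum_i b_i^2\,\lambda(x_i,y_i)^2\leq\Bigl(\sum_i b_i\int_{\mathcal{S}^2}|f_{1,2}|\,\overline{\mu}^{\{1,2\}}_{(x_i,y_i)}\Bigr)\Bigl(\sum_i b_i\int_{\mathcal{S}^2}|f_{3,4}|\,\overline{\mu}^{\{3,4\}}_{(x_i,y_i)}\Bigr),
\]
and the weak-$\ast$ limit yields $\mathscr{H}^2(X)\leq c(X)^2$. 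Note that the quadratic weights are precisely what neutralizes the $L^1$-versus-$L^2$ obstruction you identified: with linear weights $b_i$ the same computation only gives $\int\lambda^2\leq\int c(x,y)^2$, which sits on the wrong side of Jensen relative to $c(X)^2=\bigl(\int c(x,y)\bigr)^2$, and your two-scale worry about non-ergodic fields (concentration separating $\|\lambda\|_{L^1}$ from $\|\lambda\|_{L^2}$) is exactly the scenario that linear weights cannot handle. So your diagnosis of where the difficulty lies is accurate, but the paper's resolution is this weighting of the measure approximation, not a refinement of the coarea argument; if you wish to complete (or stress-test) the proof, the step to scrutinize is the claim that $\sum_i b_i^2\,\overline{\mu}_{\mathbf{x}_i}$ converges to $\overline{\mu}_{\Delta_{(2)}}$, since the full weight of the first inequality rests there.
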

We end this introduction by saying that our techniques are rather different from \cite{Gambaudo-Ghys97, Gambaudo-Ghys01}, where the authors build a ``combinatorial model'' for an ergodic field, and base their considerations on this model. The configuration space integrals have been used by Cantarella and Parsley in \cite{Cantarella-Parsley10} to derive an alternative formula for $\mathscr{H}(X)$ and its ``higher dimensional'' versions.  
Considerations of the current paper are measure--theoretic and in the simplest case can be compared to the work of  Contreras and Iturriaga  on the asymptotic linking number in \cite{Contreras-Iturriaga99}.

Lastly, we wish to indicate that in addition to the results mentioned above, there exists a wealth of approaches  to the problem of defining helicity-style invariants of volume-preserving fields, or more generally measurable foliations; see for example papers  \cite{Akhmetiev05, Verjovsky-Vila94, Bodecker-Hornig04, Evans-Berger92, Laurence-Stredulinsky00, Rivire02, Khesin03, Kotschick-Vogel03} and references given therein.

\subsection*{Acknowledgments}  We are grateful to Rob Ghrist, Chris Kottke and Paul Melvin for the email correspondence. The first author thanks the organizers of {\em Entanglement and linking} in Pisa 2011, and in particular Petr Akhmetiev for an interesting conversation  during that meeting.

\section{Some metric properties of blowups}\label{sec:blowups}

Before we review configuration space integrals, in this short section we discuss certain properties of blowups needed for later constructions. Throughout this section, $M$ is a smooth compact manifold with corners. We say that $L$ is a {\em submanifold} of a smooth compact manifold with corners whenever it is a $\mathsf{p}$--submanifold in the sense of \cite[Page I.12]{Melrose-book}, which means that local charts come from restriction of the ambient charts to coordinate subspaces. The intersection of two submanifolds $N$ and $L$ is called {\em clean} if and only if it is transverse and $N\cap L$ is a $\mathsf{p}$--submanifold. Recall, following \cite{Bott-Taubes94} and \cite[p. 19]{Thurston99}, 
\begin{definition}\label{def:blowup}
 The {\em blowup of a smooth manifold with corners $M$ along a closed
embedded submanifold with corners $L$} is the manifold with boundary $\operatorname{Bl}(M,L)$ that
is $M$ with $L$ replaced by those points of the unit normal sphere bundle $S(N(L))$ that are actually the images of
paths in $M$. There is a natural smooth map 
\begin{equation}\label{eq:blow-down-map}
\overline{\beta} \colon\operatorname{Bl}(M,L)\longrightarrow  M,
\end{equation}
 called the {\em blowdown map}, and a
partial inverse 
\begin{equation}\label{eq:blow-up-map}
 \beta \colon M-L \longrightarrow \operatorname{Bl}(M,L) - \beta^{-1}(L),
\end{equation}
called the  {\em blowup map}.
\end{definition}
Given a submanifold $N$ of $M$ such that 
$N=\operatorname{cl}(N-L)$ (``$\operatorname{cl}$" denoting the closure), we define, following \cite[Page V.7]{Melrose-book}, the {\em lift} of $N$ to $\operatorname{Bl}(M,L)$ as
\[
 \widetilde{N}=\operatorname{cl}(\beta(N-L)).
\]
Lifting a vector field on $M$ to $\operatorname{Bl}(M,L)$ amounts to lifting the orbits of the flow (c.f.~\cite{Melrose-book}). 
Then we have the following natural fact about lifts given as Proposition 5.7.2 in \cite[Page V.10]{Melrose-book}, which we paraphrase as 
\begin{proposition}\label{prop:lift}
Suppose submanifolds $N$ and $L$ have a clean intersection in $M$. Then the lift $\widetilde{N}$ in $\operatorname{Bl}(M,L)$ is an embedded submanifold of $\operatorname{Bl}(M,L)$ diffeomeorphic to $\operatorname{Bl}(N,N\cap L)$.  
\end{proposition}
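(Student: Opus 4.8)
The plan is to reduce the statement to a local computation near the intersection $N\cap L$, since the blowup, the lift, and the asserted diffeomorphism are all local constructions. Away from $L$, the blowup map $\beta$ of \eqref{eq:blow-up-map} is a diffeomorphism onto its image, so over $M-L$ the lift $\widetilde{N}=\operatorname{cl}(\beta(N-L))$ is canonically identified with $N-L$, which in turn is the complement of the exceptional divisor in $\operatorname{Bl}(N,N\cap L)$. Thus the entire content lies in a neighborhood of $N\cap L$, where one must show that $\widetilde{N}$ is a smooth embedded $\mathsf{p}$--submanifold and that the points it adds over $L$ form exactly the normal sphere bundle $S(N(N\cap L\ \text{in}\ N))$.

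First I would use cleanness to produce adapted coordinates. Since the intersection is transverse and $N$, $L$, $N\cap L$ are $\mathsf{p}$--submanifolds, near any $p\in N\cap L$ one can choose an ambient $\mathsf{p}$--submanifold chart $(x,y,z)$ in which $L=\{x=0\}$, $N=\{y=0\}$, and hence $N\cap L=\{x=0,\,y=0\}$; here $x=(x_1,\dots,x_k)$ records the normal directions to $L$. Transversality is precisely what allows the defining functions of $N$ and $L$ to involve disjoint sets of coordinates, so that the $x$-coordinates also restrict to coordinates normal to $N\cap L$ inside $N$. The delicate point at this stage is compatibility with the corner structure, which is handled by the $\mathsf{p}$--submanifold hypothesis built into the definition of clean intersection.

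Next I would compute the two blowups in these coordinates. Introducing polar coordinates $x=r\omega$ with $r\geq 0$, $\omega\in S^{k-1}$ presents $\operatorname{Bl}(M,L)$ locally as $\{(r,\omega,y,z)\}$ with blowdown $(r,\omega,y,z)\mapsto(r\omega,y,z)$. Since $N-L=\{y=0,\ x\neq 0\}$ and $\beta(x,y,z)=(|x|,x/|x|,y,z)$, taking the closure gives $\widetilde{N}=\{y=0\}$ in these coordinates, which is manifestly an embedded $\mathsf{p}$--submanifold. Running the same polar construction on $N=\{(x,z)\}$ along $N\cap L=\{x=0\}$ presents $\operatorname{Bl}(N,N\cap L)$ as $\{(r,\omega,z)\}$, and the map $(r,\omega,z)\mapsto(r,\omega,0,z)$ is the desired local diffeomorphism onto $\widetilde{N}$.

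Finally I would check that these local identifications are canonical and glue to a global one. Invariantly, the diffeomorphism is induced by the inclusion $N\hookrightarrow M$ together with the blowdown and blowup maps, and over $N\cap L$ it is the map of exceptional fibers coming from the natural isomorphism $N(N\cap L\ \text{in}\ N)\cong N(L\ \text{in}\ M)\big|_{N\cap L}$. This isomorphism is exactly where cleanness enters: surjectivity of $T N+T L\to TM$ shows that every unit normal direction to $L$ over a point of $N\cap L$ is a limiting tangent direction of a path in $N$, so the exceptional fiber of $\widetilde{N}$ is the full sphere, while $TN\cap TL=T(N\cap L)$ yields injectivity. I expect the main obstacle to be not the coordinate computation but the careful verification that the closure defining $\widetilde{N}$ adjoins precisely this sphere bundle as a smooth boundary face compatible with the corners of $M$ — that is, confirming that $\widetilde{N}$ is a genuine $\mathsf{p}$--submanifold of $\operatorname{Bl}(M,L)$ and not merely a set-theoretic closure — which is where the $\mathsf{p}$--submanifold content of the cleanness hypothesis is indispensable.
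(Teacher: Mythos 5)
Your proof is correct, but note that the paper itself offers no argument for this proposition: it is stated as a paraphrase of Proposition 5.7.2 of \cite{Melrose-book} and used as a black box, so the comparison is really with Melrose's proof, and what you have written is essentially that proof. Your reduction to a neighborhood of $N\cap L$ (since $\beta$ of \eqref{eq:blow-up-map} is a diffeomorphism over $M-L$), the joint normal form $L=\{x=0\}$, $N=\{y=0\}$, the polar-coordinate presentation in which $\operatorname{cl}(\beta(N-L))=\{y=0\}$, and the identification of the exceptional part of $\widetilde{N}$ with the sphere bundle of $N(N\cap L\ \text{in}\ N)\cong N(L\ \text{in}\ M)\big|_{N\cap L}$ (surjectivity from transversality, injectivity from $TN\cap TL=T(N\cap L)$) are all accurate and constitute the standard argument. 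The one step you assert rather than prove is the one you honestly flag: the existence of a \emph{joint} $\mathsf{p}$-normal form at corner points. Your defining-function argument ($du_i$, $dv_j$ jointly independent, extend to coordinates) is complete at interior points, but at a corner the coordinates must be chosen compatibly with the boundary-defining functions of $M$ for both submanifolds simultaneously, and this is precisely the normal-form lemma underlying Melrose's Proposition 5.7.2; strictly speaking you re-cite the hard half of the cited result. For the applications in this paper the gap is immaterial, since the blowups occur along diagonals in products of $\R^3$ (and in flowbox products), where the centers meet the corner structure trivially and your interior computation applies verbatim.
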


\no As a next step we equip $M$ with a smooth Riemannian metric $g_M$ and construct a certain smooth metric $\widetilde{g}_M$ on $\operatorname{Bl}(M,L)$ which agrees with $g$ outside of an $\delta$--tubular neighborhood\footnote{I.e.~the image of $\delta$-disk bundle of $L$ under the normal exponential map.} $U_\delta(L)$ of $L$ and turns $U_\delta(L)-L$ into a ``cylindrical end'' of  $\operatorname{Bl}(M,L)$ as in Figure \ref{F:metric}. More precisely, we define 
\begin{equation}\label{eq:blowup-metric}
\hat{g}_{\operatorname{Bl}(M,L)}=\begin{cases}
dt^2+g_{\partial U_\delta(L)}; & \text{on}\quad (L\times \mathbb{S}^{k-1})\times (0,\delta]\cong U_\delta(L)-L,\\
g_M; & \text{outside of}\quad U_\delta(L).
\end{cases}
\end{equation}
Here $k=\text{codim}(L)$, $t$ parametrizes $(0,\delta]$ segments in $(L\times \mathbb{S}^{k-1})\times (0,\delta]$,
and $g_{\partial(U_\delta(L))}$ is the restriction of $g_M$ to $\partial(U_\delta(L))$. Since $\hat{g}_M$ may not be smooth along $\partial(U_\delta(L))$, we set $\widetilde{g}_{\operatorname{Bl}(M,L)}$ to be  obtained by smoothing $\hat{g}_{\operatorname{Bl}(M,L)}$ in the intermediate region $U_{\frac 54\delta}(L)-U_{\frac 34\delta}(L)$ (see Figure \ref{F:metric}). The above construction will be used later in the case of $C[k;\R^3]$ where $\R^3$ is considered to have the standard metric. 

Next, we indicate a natural estimate which will be very useful in the next section.
 \begin{figure}[htbp]
\begin{center}
 \includegraphics[width=.8\textwidth]{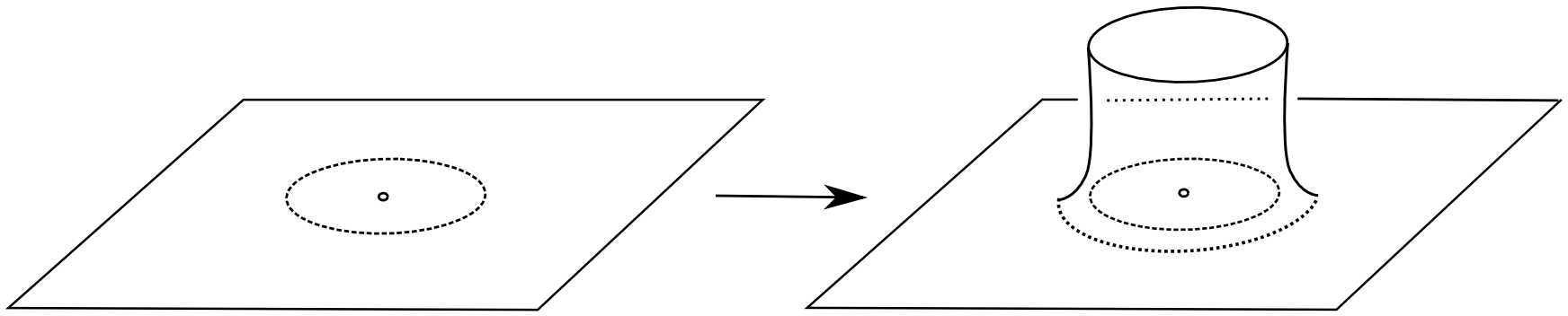}
\caption{Illustration of the metric introduced on the blowup of a point in $\R^2$.}
\label{F:metric}
\end{center}
\end{figure} 
\begin{lemma}\label{lem:A-estimate}
 Let $M$ be a smooth manifold with corners, $L$ a submanifold of $M$, and $\varpi$ a smooth $m$--form on $\operatorname{Bl}(M,L)$. Consider a submanifold $N$ of $M$ whose closure is compact and its lift $\widetilde{N}$ to $\operatorname{Bl}(M,L)$. Define
\begin{equation}\label{eq:A-const}
 A_{\varpi,\widetilde{g}}=\sup_{p\in \widetilde{N}}\ \max_{\substack{v_1,\cdots,v_m\in
T_p \widetilde{N};\\ |v_i|_{\widetilde{g}}=1}} |\varpi(v_1,\cdots, v_m)|.
\end{equation}
Then
\begin{equation}\label{eq:folk-estimate}
 \bigl|\int_N \beta^\ast\varpi\bigr|=\bigl|\int_{\widetilde{N}} \varpi\bigr| \leq  A_{\varpi,\widetilde{g}} \operatorname{vol}(\widetilde{N}).
\end{equation}
\end{lemma}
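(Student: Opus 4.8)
The plan is to establish the two assertions separately: first the equality $\int_N \beta^\ast\varpi=\int_{\widetilde N}\varpi$, which is a change-of-variables statement, and then the inequality $\bigl|\int_{\widetilde N}\varpi\bigr|\le A_{\varpi,\widetilde g}\operatorname{vol}(\widetilde N)$, which is the standard pointwise comass bound for integrating a form against the Riemannian volume. Implicit throughout is that $N$ is $m$--dimensional, so that the integral of the $m$--form $\beta^\ast\varpi$ over $N$ makes sense; I would state this at the outset. Before estimating, I would record that everything in sight is finite: since $\operatorname{cl}(N)$ is compact and $M$ is a compact manifold with corners, $\operatorname{Bl}(M,L)$ is compact and $\widetilde g$ is a genuine smooth metric on it, so $\operatorname{vol}(\widetilde N)<\infty$; as $\varpi$ is smooth on $\operatorname{Bl}(M,L)$ it is bounded, so $A_{\varpi,\widetilde g}<\infty$ and both sides of \eqref{eq:folk-estimate} are finite.

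For the equality, I would use that the blowup map $\beta$ of \eqref{eq:blow-up-map} restricts to a diffeomorphism of $N-L$ onto $\beta(N-L)$. By Proposition \ref{prop:lift} (applied under the clean-intersection hypothesis that $\widetilde N=\operatorname{cl}(\beta(N-L))$ presupposes) the lift $\widetilde N$ is an embedded submanifold of $\operatorname{Bl}(M,L)$, and the discrepancy $\widetilde N\setminus\beta(N-L)$ is contained in $\beta^{-1}(L)$, hence is a submanifold of strictly lower dimension and so has $m$--dimensional measure zero in $\widetilde N$; symmetrically $N\cap L$ has measure zero in $N$. Therefore
\[
\int_N \beta^\ast\varpi=\int_{N-L}\beta^\ast\varpi=\int_{\beta(N-L)}\varpi=\int_{\widetilde N}\varpi,
\]
where the middle step is the diffeomorphism invariance of the integral under $\beta$ and the outer steps discard the measure-zero exceptional sets.

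For the inequality I would argue pointwise. Let $\operatorname{vol}_{\widetilde g}$ denote the Riemannian volume form of $(\widetilde N,\widetilde g|_{\widetilde N})$ and write $\varpi|_{\widetilde N}=f\,\operatorname{vol}_{\widetilde g}$ for a smooth function $f$ on $\widetilde N$. Fixing $p\in\widetilde N$ and a positively oriented $\widetilde g$--orthonormal frame $e_1,\dots,e_m$ of $T_p\widetilde N$, we have $\operatorname{vol}_{\widetilde g}(e_1,\dots,e_m)=1$, so $f(p)=\varpi(e_1,\dots,e_m)$; since each $e_i$ is a $\widetilde g$--unit vector, $|f(p)|=|\varpi(e_1,\dots,e_m)|\le A_{\varpi,\widetilde g}$ directly from the definition \eqref{eq:A-const}. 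Integrating then gives $\bigl|\int_{\widetilde N}\varpi\bigr|\le\int_{\widetilde N}|f|\,\operatorname{vol}_{\widetilde g}\le A_{\varpi,\widetilde g}\operatorname{vol}(\widetilde N)$, combining with the equality above to yield \eqref{eq:folk-estimate}. The computation is routine; the only points requiring genuine care are the measure-theoretic identification across the exceptional divisor and the finiteness of $\operatorname{vol}(\widetilde N)$, and both are handed to us by Proposition \ref{prop:lift} together with the compactness of $\operatorname{Bl}(M,L)$, so I expect no substantive obstacle beyond this bookkeeping.
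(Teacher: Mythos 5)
Your proof is correct and is essentially the argument the paper intends: the paper dismisses the lemma in one line (``clear from definitions since $A_{\varpi,\widetilde{g}}$ measures a $C^0$--norm of $\varpi$ along $\widetilde{N}$''), and your two steps --- the change of variables through the diffeomorphism $\beta$ off the measure-zero exceptional set, followed by the pointwise comass bound via an orthonormal frame --- are precisely the routine details behind that remark. Your added bookkeeping (dimension of $N$, finiteness of $A_{\varpi,\widetilde{g}}$ and $\operatorname{vol}(\widetilde{N})$ from compactness, and the appeal to Proposition \ref{prop:lift} for the measure-zero discrepancy $\widetilde{N}\setminus\beta(N-L)$) is accurate and consistent with the paper's standing assumptions in Section \ref{sec:blowups}.
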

\no The proof is clear from definitions since $A_{\varpi,\widetilde{g}}$ measures a $C^0$--norm of $\varpi$ along $\widetilde{N}$.

%
%
\section{Configuration space integrals}\label{sec:conf-integrals}
 This section contains a brief overview of configuration space integrals (also known as Bott--Taubes integrals). This summary is based on \cite{Volic07} and \cite{Thurston99}.  We also include some technical results about configuration space integrals that will be needed later.  The main result for us is Theorem \ref{thm:vassiliev}.
  Before we describe configuration space integrals, we briefly review the basic notions from the theory of finite type knot invariants.  These invariants have been studied extensively in the last twenty years; for more details, see \cite{Vassiliev92}, \cite{Bar-Natan95} and \cite{Chmutov-Duzhin-Mostovoy-book}. In particular, they are conjectured to \emph{separate} knots. 
 
Let $\mathcal{K}$ be the space of knots, i.e.~smooth embeddings of $S^1$ in $\R^3$, with the $\mathcal C^{\infty}$ topology.  Any knot invariant $V\colon\mathcal{K}\longrightarrow \R$ can be extended to \emph{singular knots}, which are knots except for a finite number of transverse self-intersections, using the \emph{Vassiliev skein relation} given in Figure \ref{F:SkeinRelation}.
 \begin{figure}[htbp]
\begin{center}
 \includegraphics[scale=.4]{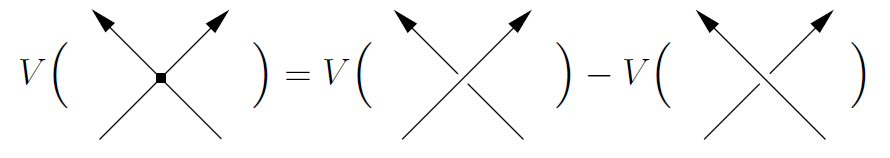}
\caption{Vassiliev skein relation.}
\label{F:SkeinRelation}
\end{center}
\end{figure} 
 The figure is supposed to indicate that all the singularities have been resolved (so a knot with $n$ singularities produces $2^n$ ordinary knots) and $V$ is evaluated on all the resulting knots.
 
 \begin{definition}\label{def:finitetype}
 Invariant $V$ is \emph{finite type $n$} or \emph{Vassiliev of type $n$} if it vanishes on singular knots with $n+1$ singularities.
 \end{definition}
 
  Let $\mathcal V_n$ be the real vector space generated by all type $n$ invariants and let $\mathcal V=\oplus_{n\geq 0}\mathcal V_n$. It is immediate that $\mathcal V_{n-1}\subset\mathcal V_n$, so that one can consider the quotient $\mathcal V_n/\mathcal V_{n-1}$ (which will appear in Theorem \ref{thm:vassiliev}).
  
Finite type invariants are intimately connected to the combinatorics of \emph{trivalent diagrams}. 
\begin{definition}\label{def:trivalentdiagrams}
{\rm
A {\em trivalent diagram} $D$ of degree $n$ is a connected graph consisting of an oriented circle, $k=k(D)$ vertices on the circle ({\em circle vertices}), $s=s(D)$ vertices off the circle ({\em free vertices}), and some number of edges connecting those vertices. The vertex set $\mathcal V(D)$ has cardinality $k+s=2n$, and all vertices are trivalent (the circle adds two to the valence of a circle vertex), from which it follows that the edge set $\mathcal{E}(D)$ is of cardinality $\frac{k+3s}{2}$. The vertices are labeled by the set $\{1,\cdots,2n\}$, and this labeling induces an orientation on the edges in $\mathcal{E}(D)$ (from the lower-labeled end vertex to the higher-labeled one). We will denote by $(i,j)$ the edge connecting vertices $i$ and $j$ where $i<j$. The diagram is regarded up to orientation-preserving diffeomorphisms of the circle. 
}
\end{definition}
Examples of trivalent diagrams (without labels or edge orientations) are presented in Figure \ref{fig:t-diag}. 
Let $TD_n$ denote the set of trivalent diagrams of degree $n$ and let $\mathcal{D}_n$ be the real vector space generated by $TD_n$ modulo subspaces generated by the {\em STU relation} illustrated in Figure \ref{fig:stu-rel}.\footnote{See \cite[p.~3]{Volic07} for more details on the $STU$ relation.}
\begin{figure}[ht]
	\centering
      \includegraphics[scale=.8]{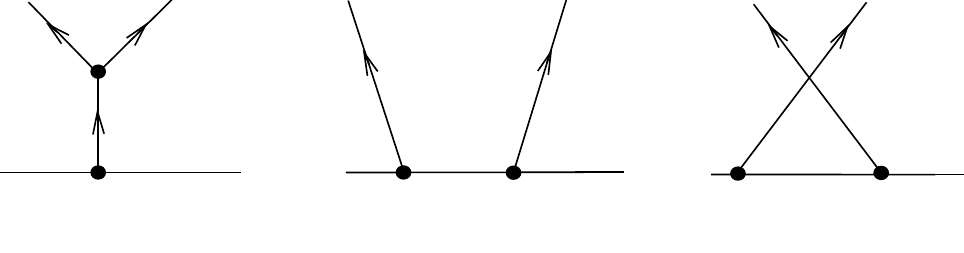}
      \put(-205,0){\small $S$} 
      \put(-130,0){\small $-T$}
      \put(-40,0){\small $U$}
      \put(-197,22){\tiny $i$} 
      \put(-202,48){\tiny $j$}
      \put(-103,9){\tiny $i$} 
      \put(-133,9){\tiny $j$}
      \put(-55,9){\tiny $i$} 
      \put(-20,9){\tiny $j$}
	\caption{The $STU$ relation.}\label{fig:stu-rel}
\end{figure}
Vector space $\mathcal{D}=\bigoplus_{n\geq 0} \mathcal{D}_n$ is in fact a commutative and co--commutative Hopf algebra \cite[Theorem 7]{Bar-Natan95}, where the product (and co--product) is 
derived from the operation of connected sum of knots. The dual $\mathcal{W}=\mathcal{D}^\ast$ of $\mathcal{D}$ is known as the space of {\em weight systems}, with $\mathcal{W}_n$ denoting its degree $n$ subspace, i.e.~the dual of $\mathcal{D}_n$. Since $\mathcal{W}$ also has the structure of a Hopf algebra it is sufficient to understand its primitive elements, called
{\em primitive weight systems}.  These generate the entire algebra.  A primitive weight system is one that vanishes on \emph{reducible} diagrams, namely those that are not obtained from two diagrams by connected sum (this informally means that, in an irreducible diagram, one cannot draw a line separating  $\mathcal V(D)$ and  $\mathcal E(D)$ into two nonempty disjoint subsets).

 We now turn our attention to the configuration space integrals.
For a manifold $M$, let $C(q;M)$ be the ordered configuration space of $q$ points in $M$ (i.e.~the $q$--fold product $M^q$, with the thick diagonal removed).  Also recall that that, given a submanifold $N$ of a manifold $M$, the {\em blowup of $M$ along $N$}, $\operatorname{Bl}(M,N)$, is obtained by replacing $N$ by the unit normal bundle of $N$ in $M$ (see Definition \ref{def:blowup}). Finally, for $S$ a subset of $\{1,...,q\}$, let $M^S$ be the product of $|S|$ copies of $M$ in $M^q$, indexed by the elements of $S$, and let $\Delta_S$ be the corresponding (thin) diagonal in $M^S$. 

Now let
$$
A[k;M] =M^k\times \prod_{S\subset\{1, ..., k\},\ |S|\geq 2} \operatorname{Bl}(M^S,\Delta_S).
$$

\begin{definition}\label{D:Compactification}
The \emph{Fulton-MacPherson compactification} of $C(k; M)$, denoted by $C[k;M]$, is the closure of the image of the inclusion
\begin{equation}\label{eq:alpha_M}
\alpha_M \colon C(k;M)\longrightarrow A[k;M],
\end{equation}
where the $S$--factors of this map are given by the blowup maps\footnote{see Equation \eqref{eq:blow-up-map}}. We denote $\alpha_M$ by $\alpha$ if $M$ is understood, and we will also refer to it as the {\em blowup map} of $C(k,M)$. The {\em blowdown} map $\overline{\alpha}_M:C[k;M]\longrightarrow M^k$ is obtained by the obvious restriction of the projection of $A[k,M]$ onto its $M^k$ factor.
\end{definition}
Equivalently, $C[k;M]$ can be obtained from $M^k$  by successive blowups of $\Delta_S$ diagonals in $M^k$ \cite{Bott-Taubes94, Thurston99}. These blowups have to be performed in the order dictated by the inclusion relation $\subset$ on the indexing sets $S$.  More precisely, if $S'\subset S$, then $\Delta_S$ should be blown up before $\Delta_{S'}$. 
Yet another equivalent definition is due to Sinha \cite{Sinha04}. All these definitions produce diffeomorphic smooth manifolds with corners, compact when $M$ is compact, and homeomorphic to 
a complement of a tubular neighborhood of the thick diagonal in $M^k$. The interior of $C[k;M]$ equals the image of $C(k;M)$ under $\alpha$ and will be denoted by $C_0(k;M)$. For the remainder of this section we will mostly need the case $M=\R^3$.  In this situation, one needs to equip the compactification $C[k;\R^3]$ with a face at infinity for it to be a compact manifold with corners . We also point out that compactification is functorial and in particular we have 
\begin{proposition}[\cite{Fulton-MacPherson94,Sinha04}]\label{prop:conf-funct}
Suppose $g:M\to N$ is an embedding of a smooth manifold $M$ into a smooth manifold $N$. We then  have an induced embedding 
\[
\widetilde{g}:C[k;M]\longrightarrow C[k;N]
\]
 of manifolds with corners, which respects the boundary stratifications and extends the obvious product map $g^k:C(k;M)\longrightarrow C(k;N)$, $g^k=g\times\cdots\times g$, such that the following diagram commutes 
\begin{equation}
\begin{diagram}\label{diag:lifts}
	C[k;M] & &\rTo^{\widetilde{g}} & & C[k;N]  \\
		\uTo_{\alpha_M} & &  & &   \uTo_{\alpha_N} \\
  C(k;M) & &\rTo^{g^k} & & C(k;N).
\end{diagram}
\end{equation}
\end{proposition}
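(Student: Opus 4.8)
The statement to prove is Proposition~\ref{prop:conf-funct}, the functoriality of the Fulton--MacPherson compactification with respect to embeddings.

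\bigskip

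The plan is to build the induced map $\widetilde{g}$ factor by factor on the ambient space $A[k;M]$ and then check that it restricts correctly to the compactified configuration space $C[k;M]$ sitting inside it. First I would observe that an embedding $g\colon M\to N$ induces, for every subset $S\subset\{1,\dots,k\}$ with $|S|\ge 2$, a map $g^S\colon M^S\to N^S$ which carries the thin diagonal $\Delta_S\subset M^S$ into $\Delta_S\subset N^S$, and which is an immersion transverse to these diagonals because $g$ is an embedding. The key local input is that a smooth map taking a submanifold into a submanifold compatibly (here, respecting the normal-bundle directions) lifts to the blowups: this is precisely the content of Proposition~\ref{prop:lift} together with the functoriality of the normal sphere bundle construction in Definition~\ref{def:blowup}. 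Since $g$ is an embedding, its differential identifies the unit normal sphere bundle $S(N(\Delta_S))$ in $M^S$ with a subbundle of the corresponding bundle in $N^S$, so each blowup factor $\operatorname{Bl}(M^S,\Delta_S)\to\operatorname{Bl}(N^S,\Delta_S)$ is well-defined and smooth, compatibly with the blowdown maps $\overline{\beta}$.

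\bigskip

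Next I would assemble these factorwise maps into a single smooth map $A[k;M]\to A[k;N]$, namely $g^k$ on the $M^k$ factor together with the blowup-factor maps just constructed. By construction this ambient map commutes with the inclusions $\alpha_M,\alpha_N$ on the open stratum, since on $C(k;M)$ the blowup maps $\beta$ are just the identity composed with $g$ coordinatewise; this gives commutativity of the lower square of diagram~\eqref{diag:lifts} before passing to closures. I would then \emph{define} $\widetilde{g}$ to be the restriction of this ambient map to $C[k;M]=\operatorname{cl}(\alpha_M(C(k;M)))$. Because the ambient map is continuous (indeed smooth) and sends the image $\alpha_M(C(k;M))$ into $\alpha_N(C(k;N))$, it carries the closure into the closure, so $\widetilde{g}(C[k;M])\subset C[k;N]$ and the required commuting square holds by continuity.

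\bigskip

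The remaining work, and the main obstacle, is verifying that $\widetilde{g}$ is actually an embedding of manifolds with corners respecting the boundary stratification, rather than merely a continuous map on closures. For this I would use the characterization of $C[k;M]$ as the iterated blowup of $M^k$ along the diagonals $\Delta_S$, performed in the order dictated by reverse inclusion, as recalled after Definition~\ref{D:Compactification}. At each stage of the iterated blowup, the lift of $g^k$ is controlled by Proposition~\ref{prop:lift}: the clean-intersection hypothesis is met because the diagonals $\Delta_S$ and the images of the lifts meet cleanly (an embedding pulls back the diagonal stratification to the diagonal stratification), so the lift at each stage is again an embedding onto $\operatorname{Bl}(N^k,\cdots)$ restricted appropriately, and it respects the new boundary divisor introduced by that blowup. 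Inducting over the blowup order then shows simultaneously that $\widetilde{g}$ is a smooth embedding and that it matches boundary faces of $C[k;M]$ (indexed by nested collections of subsets) with the corresponding faces of $C[k;N]$. The delicate point throughout is bookkeeping the normal directions compatibly across the successive blowups; since this is exactly the functoriality statement established in \cite{Fulton-MacPherson94, Sinha04}, I would at this step cite those references for the full verification while indicating that the essential geometric mechanism is the lifting lemma (Proposition~\ref{prop:lift}) applied iteratively.
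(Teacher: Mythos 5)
The paper offers no proof of this proposition at all: it simply refers the reader to \cite[Corollary 4.8]{Sinha04}, so your sketch is already more detailed than the source. Your overall architecture --- define the map factorwise on $A[k;M]$ via induced maps on normal sphere bundles, observe commutativity of the square \eqref{diag:lifts} on the open stratum, pass to closures using $\widetilde{g}(\operatorname{cl}(A))\subset\operatorname{cl}(\widetilde{g}(A))$, and control embeddedness and the boundary stratification through the iterated-blowup description --- is the standard argument and is in the spirit of what \cite{Sinha04} and \cite{Fulton-MacPherson94} actually do.

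However, one assertion is genuinely false, and it infects your appeal to Proposition \ref{prop:lift}. You claim that $g^S\colon M^S\to N^S$ is ``an immersion transverse to these diagonals because $g$ is an embedding.'' It is not: take $k=|S|=2$ and $M$ a curve in $N=\R^3$; at a point $(x,x)$ of the diagonal, the subspace $dg^2\bigl(T(M^2)\bigr)+T\Delta_{\{1,2\}}(N)$ has dimension at most $2+3=5<6$. A dimension count shows transversality of $g^k(M^k)$ to $\Delta_S(N)$ holds only when $\dim M=\dim N$, which excludes precisely the case this paper needs, namely $\widetilde{\textsf{ev}}$ induced from a knot $S^1\hookrightarrow\R^3$ in Diagram \eqref{diag:bt}. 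What is true, and what suffices, is cleanness in Melrose's original sense: $(g^S)^{-1}(\Delta_S(N))=\Delta_S(M)$ by injectivity of $g$, and $dg^S(TM^S)\cap T\Delta_S(N)=dg^S(T\Delta_S(M))$ by injectivity of $dg$, so the induced map of normal bundles $N(\Delta_S(M))\to N(\Delta_S(N))$ is fiberwise injective and the lift to the blowups exists. Note, though, that the paper's own definition of ``clean'' in Section \ref{sec:blowups} \emph{includes} transversality, so under that definition the hypothesis of Proposition \ref{prop:lift} is not met in your key case; to repair the argument you must either invoke Melrose's Proposition 5.7.2 in its non-transverse clean form \cite{Melrose-book}, or construct the lift directly from the fiberwise-injective normal map as in \cite{Sinha04}. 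With that repair, your induction over the blowup order, and hence the proof, goes through.
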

\no The reader may consult, for example, \cite[Corollary 4.8]{Sinha04} for a proof of this proposition.

Given the compactified configuration space $C[q;\R^3]$ and any two positive integers $k$ and $s$, define $C[k,s;\mathcal{K},\R^3]$ to be the pullback bundle in the following diagram
\begin{equation}
\begin{diagram}\label{diag:bt}
	C[k,s;\mathcal{K},\R^3] & &\rTo^{p_{k,s}} & & C[k+s;\R^3]  \\
		\dTo_{\bar{\pi}_k} & &  & &   \dTo_{\pi_k} \\
  C[k;S^1]\times\mathcal{K} & &\rTo^{\widetilde{\textsf{ev}}} & & C[k;\R^3], 
\end{diagram}
\end{equation}
where $\pi_k$ is the usual projection onto the first $k$ coordinates and 
\[
\widetilde{\textsf{ev}}(\,\cdot\,,K):C[k;S^1]\longrightarrow C[k;\R^3]
\]
 is the evaluation map induced from the knot embedding map $K:S^1\hookrightarrow \R^3$; see Proposition \ref{prop:conf-funct}.  In other words it is a ``lift'' of the  product map 
\begin{equation}\label{eq:ev-map}
\begin{split}
\textsf{ev}\colon  C(k;S^1)\times\mathcal{K} & \longrightarrow C(k;\R^3)  \\
 ((t_1,\cdots,t_k),K) & \longmapsto (K(t_1),\cdots, K(t_k)) 
\end{split}
\end{equation}
\no to the compactified spaces.
All maps in Diagram \eqref{diag:bt} are smooth maps of manifolds with corners \cite{Bott-Taubes94,Sinha04}, which is equivalent to saying that they admit smooth extensions to 
some open neighborhoods of the domains of their charts.

Returning now to the diagram algebra $\mathcal{D}$, for  a trivalent diagram $D\in \mathcal{D}_n$, define the associated Gauss map to be the product
\begin{equation}\label{eq:h_D}
 h_D=\prod_{(i,j)\in \mathcal E(D)} h_{i,j}\colon C[k,s;\mathcal{K},\R^3]\longrightarrow \prod_{(i,j)\in \mathcal{E}(D)} S^2,
\end{equation}
where 
$h_{i,j}\colon C[k,s;\mathcal{K},\R^3]\longrightarrow S^2$ is the lift to the compactification of the classical Gauss map  
\begin{align*}
C(k+s;\R^3) & \longrightarrow S^2,\\ 
(x_1,..., x_i,..., x_j,...,x_{k+s}) & \longmapsto \frac{x_j-x_i}{|x_j-x_i|}.
\end{align*}
  Maps $h_{i,j}$ extend smoothly to the boundary of $C[k,s;\mathcal{K},\R^3]$,  \cite[Appendix]{Bott-Taubes94}.  Thus $h_D$ is also smooth, and as a result we obtain
a smooth $(k+3s)$--form $\omega_D$ on $C[k,s;\mathcal{K},\R^3]$ via the pullback:
\begin{equation}\label{eq:omega_D}
 \omega_D=h^\ast_D(\omega\times\cdots\times\omega)=\prod_{(i,j)\in \mathcal{E}(D)} \omega_{i,j},\qquad \omega_{i,j}=h^\ast_{i,j}\omega.
\end{equation}
Here $\omega$ is the area form on $S^2$, usually chosen in standard coordinates on $\R^3$ as 
\[
 \omega(x,y,z)=\frac{x\,dy\wedge dz-y\,dx\wedge dz+z\,dx\wedge dy}{(x^2+y^2+z^2)^{\frac{3}{2}}}.
\]
One now has a smooth bundle of manifolds with corners,
\[
p_\mathcal{K}:C[k,s;\mathcal{K},\R^3]\longrightarrow \mathcal{K},
\]
which is the composition of $\bar{\pi}_k$ with the trivial projection of $C[k;S^1]\times\mathcal{K}$ onto the second factor.  The fiber of 
$p_\mathcal{K}$ over a knot $K$ is the configuration space of $k+s$ points in $\R^3$, first $k$ of which are constrained to lie on $K$.
Integration along the $(k+3s)$ dimensional fiber of $p_\mathcal{K}$
produces a 0-form (a function) on $\mathcal{K}$. We will denote its value at $K\in\mathcal{K}$  by $I_D(K)$.  In other words,
\begin{equation}\label{eq:I_D(K)}
 I_D(K):=\bigl((p_{\mathcal{K}})_\ast \omega_D\bigr)(K).
\end{equation}
\begin{remark}\label{rem:w_D-integrable}
 {\rm
  Note that $\omega_D$ vanishes to the order $1/r^{n}$ at ``infinity'' of $C[k+s;\R^3]$, where $r$ is the distance from the origin.  It is therefore integrable along fibers of $p_{\mathcal{K}}$ and thus $(p_{\mathcal{K}})_\ast \omega_D$ is well-defined.
 }
\end{remark}
 We now have the following fundamental result originally due to Altschuler and Freidel \cite{Altschuler-Freidel95}, but reproved by Thurston \cite{Thurston99} in the form we use here. 
\begin{theorem}[\cite{Altschuler-Freidel95, Thurston99}]\label{thm:vassiliev}
Given a primitive weight system $W\in \mathcal{W}_n$, $n\geq 0$, the map defined by
\begin{align}
  V_W:\mathcal{K} & \longrightarrow \R \label{eq:T_W}\\
  K& \longmapsto \frac{1}{(2n)!} \sum_{D\in TD_n}W(D)\bigl(I_D(K)-m_D\,I_{\includegraphics[scale=.06]{writhe2}}(K)\bigr),\notag
\end{align}
where $m_D$ is a real number which depends only on $D$, is a finite type $n$ knot invariant. Moreover, any finite type invariant of type $n$ can be expressed as $V_W$ for some primitive weight system $W\in \mathcal{W}_n$.  More precisely, $V_W$ gives an isomorphism $\mathcal V_n/\mathcal V_{n-1}\cong \mathcal W_n$ for all $n\geq 0$ (where by $\mathcal V_{-1}$ we mean the one-dimensional space of constant invariants).
\end{theorem}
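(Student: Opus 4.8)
The plan is to prove the theorem in three stages: first that each $V_W$ is a genuine knot invariant (a locally constant function on $\mathcal{K}$), then that it is of finite type $n$, and finally that the assignment $W\mapsto V_W$ induces the claimed isomorphism $\mathcal{V}_n/\mathcal{V}_{n-1}\cong\mathcal{W}_n$. For the first stage the key point is that, although $I_D$ alone is generally not an invariant, a suitably weighted and corrected sum is. I would regard $I_D=(p_{\mathcal{K}})_\ast\omega_D$ as a $0$-form on $\mathcal{K}$ and compute its exterior derivative via Stokes' theorem for integration along the fiber of the manifold-with-corners bundle $p_{\mathcal{K}}$. Since the area form $\omega$ on $S^2$ is closed, the form $\omega_D$ of \eqref{eq:omega_D} is closed, so $d\omega_D=0$ and the entire differential $dI_D$ is accounted for by fiber integration over the codimension-one boundary faces of $C[k,s;\mathcal{K},\R^3]$. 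These faces are indexed by subsets of colliding points together with a face ``at infinity,'' and fall into the standard classes: \emph{principal faces}, where exactly two points collide; \emph{hidden faces}, where three or more points collide or several pairs collide simultaneously; and the \emph{anomalous face}, where all points rush together along a line.

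Next I would show that, after forming $\sum_{D\in TD_n}W(D)I_D$, the principal and hidden contributions either cancel or vanish. The principal-face contributions reorganize, through the local geometry of a single collision, into exactly the combinations appearing in the $STU$ relation of Figure \ref{fig:stu-rel}; since $W\in\mathcal{W}_n=\mathcal{D}_n^\ast$ annihilates every $STU$ combination by the very definition of $\mathcal{D}_n$, these terms sum to zero. The hidden faces I would kill by a dimension count: on such a face the relevant restriction of $h_D$ factors through a space of dimension too small to support the top-degree form, an argument originally due to Kontsevich and sharpened by Thurston. The sole exception is the anomalous face, whose contribution is a universal multiple of the self-linking (writhe) diagram $D_1$ evaluated on $K$; this is precisely why the correction term $m_D\,I_{D_1}$ must be subtracted, with $m_D$ chosen to be the anomalous coefficient of $D$. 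With these cancellations in place one gets $dV_W=0$, so $V_W$ is locally constant and hence a knot invariant.

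For the finite type property I would extend $V_W$ to singular knots via the Vassiliev skein relation of Figure \ref{F:SkeinRelation} and show it vanishes on any knot with $n+1$ transverse double points. The natural route is a leading-order analysis: resolving all $n+1$ crossings and taking the alternating sum, the terms $I_D$ survive only when the $n+1$ chordal constraints can be matched by edges of a degree-$n$ diagram, which is impossible, forcing $V_W$ to be of type $n$. Finally, to obtain the isomorphism, I would compute the \emph{symbol} of $V_W$, namely its value on a knot with exactly $n$ double points regarded as a functional on chord diagrams; a direct evaluation shows this symbol equals $W$, up to the normalization $1/(2n)!$ built into \eqref{eq:T_W}. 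This identifies the composite $\mathcal{W}_n\to\mathcal{V}_n/\mathcal{V}_{n-1}\to\mathcal{W}_n$ with the identity, giving surjectivity of the symbol map and injectivity of $W\mapsto V_W$; combined with Vassiliev's elementary observation that a type $n$ invariant with vanishing symbol is already of type $n-1$ (injectivity of the symbol map), this yields the asserted isomorphism.

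I expect the main obstacle to be the vanishing of the hidden faces together with the precise identification and correction of the anomalous face. The principal-face/$STU$ cancellation is essentially formal once the local geometry of a collision is set up, and the symbol computation is combinatorial; but controlling all higher-codimension collisions, and ensuring that the only surviving boundary term is the anomaly absorbed by $m_D\,I_{D_1}$, requires the delicate degree and involution arguments that form the technical heart of the Altschuler--Freidel--Thurston proof.
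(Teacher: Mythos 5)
The paper states this theorem without proof, simply citing \cite{Altschuler-Freidel95, Thurston99}, and your proposal correctly reconstructs precisely the argument of those sources: Stokes' theorem applied to the fiber integration over $C[k,s;\mathcal{K},\R^3]$, cancellation of principal faces through the $STU$ relation paired against the weight system $W$, vanishing of hidden faces by the degree-count and involution arguments, absorption of the anomalous face into the correction term $m_D\,I_{D_1}$, and the symbol computation identifying $\mathcal{W}_n\to\mathcal{V}_n/\mathcal{V}_{n-1}\to\mathcal{W}_n$ with the identity. Your sketch is accurate for this route (the only minor omission being an explicit remark that the face at infinity contributes nothing, which follows from the decay of $\omega_D$ noted in Remark \ref{rem:w_D-integrable}), so it matches the proof the paper relies on.
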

\no Notice that the statement above  is a more elaborate version of \eqref{eq:V_W-sum}, with $a=\frac{1}{2n!}W(D)$ and $b=\frac{1}{2n!}m_D$.
The term $m_D\,I_{\includegraphics[scale=.06]{writhe2}}(K)$ is known as the {\em anomalous correction}.  The integral $I_{\includegraphics[scale=.06]{writhe2}}(K)$ computes the {\em writhing number}\footnote{Note that the writhing number is an average {\em writhe} over all possible projections of the knot.} of $K$ (see \cite{Calugareanu59}). 

 We next wish to 
clarify some technical aspects of the integration in \eqref{eq:I_D(K)} that will be needed for later constructions. Let $K\in \mathcal{K}$ and $D\in \mathcal{D}_n$.  The Gauss map $h_D$ 
from \eqref{eq:h_D} factors as $h_D=\bar{h}_D\circ p_{k,s}$ (see Diagram \eqref{diag:bt}), where 
\[
\bar{h}_D:C[k+s,\R^3]\longrightarrow \prod_{(i,j)\in \mathcal E(D)} S^2
\]
 has an identical definition as $h_D$ in \eqref{eq:h_D}. We also have the analog of the form $\omega_D$ on 
$C[k+s,\R^3]$, given as $\bar{h}_D^*(\omega\times\cdots\times \omega)$. We will also denote this form by $\omega_D$. Integrating along the $3s$-dimensional fiber of $\pi_k$ in Diagram \eqref{diag:bt},
we obtain a smooth $k$-form (see \cite[p. 5281]{Bott-Taubes94})
\begin{equation}\label{eq:varpi-D}
 \varpi_D=(\pi_k)_\ast \omega_D
\end{equation}
on $C[k;\R^3]$ (see Remark \ref{rem:w_D-integrable}).
On the other hand, the evaluation map \eqref{eq:ev-map}, produces 
at each point 
\[
\textsf{ev}((t_1,\cdots,t_k),K)=(K(t_1),\cdots,K(t_k))\in C(k;\R^3),\qquad (t_1,\cdots,t_k)\in C(k;S^1),
\] 
a frame
\[
 \dot{K}_k=\{\dot{K}(t_1),\cdots,\dot{K}(t_k)\},\qquad \dot{K}(t_i)=\frac{d}{dt_i} K(t_i).
\]
Lifting this frame to $C[k;\R^3]$, via the map pushforward $\alpha_\ast$ induced by $\alpha=\alpha_{\R^3}$ from \eqref{eq:alpha_M}, we obtain the frame $\widetilde{\dot{K}}_k=\alpha_\ast\dot{K}_k$.  Contracting  into $\varpi_D$, given by \eqref{eq:varpi-D}, we obtain a (distributional) function over $C[k;\R^3]$, 
determined by  
\begin{equation}\label{eq:f_D,K}
  f_{D,K}(\mathbf{t}):=\varpi_D(\widetilde{\dot{K}}_k)(\mathbf{t})=\alpha^\ast\varpi_D(\dot{K}_k)(\mathbf{t})=(\textsf{ev}^\ast_K\alpha^\ast\varpi_D)(\mathbf{t})[\partial_{\mathbf{t}}],
\end{equation}
for $\mathbf{t}=(t_1,\cdots,t_k)$.  Here  $\alpha^\ast$ denotes the pullback induced by $\alpha$. 
\begin{proposition}\label{prop:bt-integral-f_D,K}
 With $f_{D,K}$ as defined in \eqref{eq:f_D,K}, we have the following identity for $I_D(K)$:
\begin{equation}\label{eq:bt-integral-f_D,K}
I_D(K)=\underbracket[0.5pt]{\int^{T}_0\cdots\int^{T}_0}_{k\text{ times}} f_{D,K}(\mathbf{t})\, d\mathbf{t},
\end{equation}
where the interval $[0,T]$ parametrizes the knot $K$.
\end{proposition}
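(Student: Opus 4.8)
The plan is to evaluate the fiberwise integral $I_D(K)=\bigl((p_{\mathcal{K}})_\ast\omega_D\bigr)(K)$ of \eqref{eq:I_D(K)} in two stages dictated by the two vertical maps of Diagram \eqref{diag:bt}, exploiting the fact that its top square is Cartesian so that integration along the fiber commutes with the base change $\widetilde{\textsf{ev}}$. First I would note that $p_{\mathcal{K}}=\mathrm{pr}_2\circ\bar{\pi}_k$, where $\mathrm{pr}_2\colon C[k;S^1]\times\mathcal{K}\to\mathcal{K}$ is projection onto the second factor, and invoke the functoriality of integration along the fiber under composition of bundles to write $(p_{\mathcal{K}})_\ast=(\mathrm{pr}_2)_\ast\circ(\bar{\pi}_k)_\ast$. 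Here the fiber of $\bar{\pi}_k$ is the $3s$--dimensional configuration space of the $s$ free points, and the fiber of $\mathrm{pr}_2$ over $K$ is $C[k;S^1]$, so the $(k+3s)$--dimensional fiber of $p_{\mathcal{K}}$ splits accordingly.

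Next I would use that, by the factorization $h_D=\bar{h}_D\circ p_{k,s}$ of \eqref{eq:h_D}, the form $\omega_D$ on $C[k,s;\mathcal{K},\R^3]$ is the pullback $\omega_D=p_{k,s}^\ast\bar{\omega}_D$ of $\bar{\omega}_D=\bar{h}_D^\ast(\omega\times\cdots\times\omega)$ (the form also denoted $\omega_D$ on $C[k+s;\R^3]$ in \eqref{eq:varpi-D}). Applying the base--change identity for the Cartesian square, namely $(\bar{\pi}_k)_\ast\,p_{k,s}^\ast=\widetilde{\textsf{ev}}^\ast\,(\pi_k)_\ast$, I obtain
\[
(\bar{\pi}_k)_\ast\omega_D=(\bar{\pi}_k)_\ast\,p_{k,s}^\ast\bar{\omega}_D=\widetilde{\textsf{ev}}^\ast(\pi_k)_\ast\bar{\omega}_D=\widetilde{\textsf{ev}}^\ast\varpi_D,
\]
where $\varpi_D=(\pi_k)_\ast\bar{\omega}_D$ is the smooth $k$--form of \eqref{eq:varpi-D} on $C[k;\R^3]$.

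Then I would fix $K$ and compute $I_D(K)=\bigl[(\mathrm{pr}_2)_\ast\widetilde{\textsf{ev}}^\ast\varpi_D\bigr](K)=\int_{C[k;S^1]}(\widetilde{\textsf{ev}}(\cdot,K))^\ast\varpi_D$. Pulling back along the interior inclusion $\alpha_{S^1}\colon C(k;S^1)\to C[k;S^1]$, which is a diffeomorphism onto a full--measure subset (the boundary strata being negligible), and using the commuting square \eqref{diag:lifts} characterizing $\widetilde{\textsf{ev}}(\cdot,K)$ as the lift of $\textsf{ev}_K$, I get $\alpha_{S^1}^\ast(\widetilde{\textsf{ev}}(\cdot,K))^\ast\varpi_D=\textsf{ev}_K^\ast\alpha^\ast\varpi_D$ on $C(k;S^1)$. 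Since $d\,\textsf{ev}_K(\partial_{t_i})=\dot{K}(t_i)$, the coefficient of this $k$--form with respect to $dt_1\wedge\cdots\wedge dt_k$ is exactly the function $f_{D,K}$ of \eqref{eq:f_D,K}; and as $C(k;S^1)$ differs from $(S^1)^k\cong[0,T]^k/\!\sim$ only by the measure--zero thin diagonals, the integral becomes $\int_0^T\cdots\int_0^T f_{D,K}(\mathbf{t})\,d\mathbf{t}$, which is \eqref{eq:bt-integral-f_D,K}.

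The step I expect to be the main obstacle is justifying the base--change identity $(\bar{\pi}_k)_\ast\,p_{k,s}^\ast=\widetilde{\textsf{ev}}^\ast\,(\pi_k)_\ast$ in the category of manifolds with corners for these fiberwise--singular forms: one must verify that the pullback structure of Diagram \eqref{diag:bt} identifies the fibers of $\bar{\pi}_k$ and $\pi_k$ compatibly with their orientations, and that the fiber integrals converge well enough (via the $1/r^n$ decay and the smoothness of $\varpi_D$ noted in Remark \ref{rem:w_D-integrable} and in \cite{Bott-Taubes94}) to license both the splitting of $(p_{\mathcal{K}})_\ast$ and its commutation with pullback. The accompanying orientation bookkeeping for the decomposition of the fiber of $p_{\mathcal{K}}$ into the free--point factor and the $C[k;S^1]$ factor is routine but must be fixed by a single consistent convention.
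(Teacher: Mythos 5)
Your proposal is correct, and it arrives at \eqref{eq:bt-integral-f_D,K} by the same underlying mechanism as the paper --- integrate out the $s$ free vertices first to produce $\varpi_D=(\pi_k)_\ast\omega_D$, then integrate the resulting $k$--form over the circle points, discarding the measure-zero boundary of $C[k;S^1]$ --- but you package the first stage differently. You prove the base-change identity $(\bar{\pi}_k)_\ast\,p_{k,s}^\ast=\widetilde{\textsf{ev}}^\ast\,(\pi_k)_\ast$ at the level of the full Cartesian square \eqref{diag:bt} and only then restrict to the fiber over $K$, whereas the paper restricts to the fiber over $K$ at the outset and verifies the interchange by hand in coordinates: it writes $\alpha^\ast\omega_D=d\mathbf{y}\wedge\hat{\alpha}_D+\doublehat{\alpha}_D$ in the coordinates $(\mathbf{x},\mathbf{y})$ pulled back from $(\R^3)^k\times(\R^3)^s$, integrates the multi-index coefficients $\hat{a}_{I,J}$ over the fiber $\pi^{-1}_k(\mathbf{x})=C(s;\R^3-\{x_1,\cdots,x_k\})$, and checks directly that $j^\ast\alpha^\ast\omega_D$ has coefficient function $f_{D,K}(\mathbf{t})$ against $d\mathbf{y}\wedge d\mathbf{t}$, which is exactly your coefficient identification via \eqref{eq:f_D,K} together with a bare-hands proof of your base-change step, orientation bookkeeping included. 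The trade-off: your functorial formulation isolates cleanly what must be checked (and your flagged ``main obstacle'' is indeed the crux), while the paper's coordinate computation \emph{is} the verification of that obstacle. One point you should make explicit in a final write-up: base change for fiber integration is usually stated for compact fibers or compactly supported forms, and the $\pi_k$--fibers here are noncompact, so the identity should be justified chartwise as Fubini for the absolutely convergent coefficient integrals --- convergence being supplied by the $1/r^n$ decay of $\omega_D$ noted in Remark \ref{rem:w_D-integrable} --- which is precisely the computation the paper's proof performs.
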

\begin{proof}
 Restricting $\bar{\pi}_k$ in Diagram \eqref{diag:bt} to the fiber over the point $K\in \mathcal{K}$ and the rest of the maps to the subset of the interior of the compactifications $C(\,\cdot\,;\R^3)\subset C[\,\cdot\,; \R^3]$, we have a diagram
\begin{diagram}
	C_0(k,s;K,\R^3) & & \rInto^{j} & & C_0(k;S^1) \times C_0(k+s;\R^3) & &\rTo^{v} & & C_0(k+s;\R^3) \\
	 & \rdTo(4,2)^{\bar{\pi}_k}	&  & & \dTo & &  & &   \dTo_{\pi_k} \\
  & & & & C_0(k;S^1) & &\rTo^{\textsf{ev}_K} & & C_0(k;\R^3),
\end{diagram}
where $p=v\circ j$. Since $C(k+s;\R^3)\subset (\R^3)^k\times (\R^3)^s$, we may work with the standard coordinates $(\mathbf{x},\mathbf{y})=(x_1,\cdots,x_k,y_1,\cdots, y_s)$ on $(\R^3)^k\times (\R^3)^s$ imposed by the blowup map $\alpha:C(k+s;\R^3)\longrightarrow C_0(k+s;\R^3)\subset C[k+s;\R^3]$ of \eqref{eq:alpha_M}. Let components of vectors $x_j$ and $y_j$ in $(\mathbf{x},\mathbf{y})$ be further indexed as $x_j=(x^i_j)_{i=1,2,3}$, $y_j=(y^i_j)_{i=1,2,3}$ and denote by $\mathbf{t}=(t_1,\cdots,t_k)$ the coordinates on $C(k;S^1)\subset (S^1)^k$.
Then the $(k+3s)$--form $\alpha^\ast\omega_D$ can be written as 
\[
 \alpha^\ast\omega_D=d\mathbf{y}\wedge\hat{\alpha}_D +\doublehat{\alpha}_D,
\]
where $d\mathbf{y}$ is the top degree form on $(\R^3)^s$ and $\doublehat{\alpha}_D$ some $(k+3s)$--form not containing the term $d\mathbf{y}$. Using the multi-index notation, let us write
\[
\hat{\alpha}_D=\sum_{I,J} \hat{a}_{I,J}(\mathbf{x},\mathbf{y}) d\mathbf{x}^J_I.
\]  
Here $d\mathbf{x}^J_I=dx^{i_1}_{j_1}\wedge\cdots\wedge dx^{i_k}_{j_k}$ and $I$, $J$ are appropriate multiindices. After integrating along the fiber, we get
\[
 (\pi_k)_\ast\alpha^\ast\omega_D=\alpha^\ast\varpi_D(\mathbf{x})=\sum_{I,J} \Bigr(\int_{\pi^{-1}_k({\bf x})} \hat{a}_{I,J}(\mathbf{x},\mathbf{y})\, d\mathbf{y}\Bigl) d\mathbf{x}^J_I,
\] 
where $\pi^{-1}_k({\bf x})=C(s,\R^3-\{x_1,\cdots,x_k\})$.\footnote{I.e.~$\pi^{-1}_k({\bf x})$  is a configuration space of $s$ points in $\R^3$ with $k$ points deleted, see \cite{Fadell-Husseini01}.} From 
\eqref{eq:f_D,K}, we have
\[
 f_{D,K}(\mathbf{t})  =\sum_{I,J} \Bigr(\int_{\pi^{-1}_k(K(\mathbf{t}))} \hat{a}_{I,J}(K(t_1),\cdots,K(t_k),\mathbf{y})\, d\mathbf{y}\Bigl) b^J_I(t_1,\cdots,t_k), 
\]
where $b^J_I(t_1,\cdots,t_k)=d\mathbf{x}^J_I[\dot{K}^{\wedge_k}]$ and
\[
(\textsf{ev}^\ast_K\alpha_\ast\varpi_D)(\mathbf{t})=f_{D,K}(\mathbf{t})\,d\mathbf{t},\qquad d\mathbf{t}=dt_1\wedge\cdots\wedge dt_k.
\]
On the other hand, $v^\ast\alpha^\ast\omega_D$ has the identical expression as $\alpha^\ast\omega_D$, so we may write
$j^\ast\alpha^\ast\omega_D$ for $j^\ast v^\ast\alpha^\ast\omega_D$. In the $(\mathbf{t},\mathbf{x},\mathbf{y})$ coordinates, we obtain
\[
 j^\ast\alpha^\ast\omega_D=\sum_{I,J} \hat{a}_{I,J}(K(t_1),\cdots,K(t_k),\mathbf{y})\,b^J_I(t_1,\cdots,t_k)\,d\mathbf{y}\wedge dt_1\wedge\cdots\wedge dt_k.
\]
Since the boundary of the configuration space $C[k;S^1]$ is measure zero, it 
does not contribute to the integral in \eqref{eq:I_D(K)} and we easily see that the following identities, proving \eqref{eq:bt-integral-f_D,K}, hold:
\[
\begin{split}
I_D(K) & =\bigl((p_{\mathcal{K}})_\ast \alpha^\ast\omega_D\bigr)(K)=\int_{C(k;S^1)}(\bar{\pi}_k)_\ast(\alpha^\ast\omega_D)\\
 & =\int_{C(k;S^1)}(\bar{\pi}_k)_\ast(j^\ast\alpha^\ast\omega_D)=\int_{C(k;S^1)} f_{D,K}(\mathbf{t})\,d\mathbf{t}.
\end{split}
\]
\end{proof}

\section{Proofs of Theorems \ref{thm:main-knots} and \ref{thm:main-ergodic}}\label{sec:proofs}

In the setting of a volume-preserving vector field $X$ on a domain $\mathcal{S}$ from Theorem \ref{thm:main-knots}, we wish to apply constructions of Section \ref{sec:conf-integrals} to the family of knots $\{\bar{\mathscr{O}}_T(x)\}$ obtained from the ``closed up'' orbits of $X$. Note that any such orbit $\bar{\mathscr{O}}_T(x)$ (as in \eqref{eq:O-orbits}) is generically a piecewise smooth knot in $\R^3$. In order to define $\bar{\mathscr{O}}_T(x)$, one needs a system $\{\sigma(x,y)\}$ of {\em short paths} on $\mathcal{S}$, which can in general be defined from geodesics  after an appropriate choice of the metric on $\mathcal{S}$ \cite{Vogel03}. Short paths will in particular be dealt with in Lemma \ref{lem:short-paths}.  The main property of short paths we will use is that their length is uniformly bounded.  Note that we can assume $\bar{\mathscr{O}}_T(x)$ is smooth, because its corners can be rounded and $\bar{\mathscr{O}}_T(x)$ is the $C^0$ limit of these ``rounded'' parametrizations.

Recall that the basic ingredient of the formula \eqref{eq:T_W} for any finite type $n$ invariant $V_W$ is the integration function $I_D$ associated with a diagram $D\in \mathcal{D}_n$.  Following the ideas outlined in the Introduction we focus on the family of functions 
\begin{align*}
\mathcal{S} & \longrightarrow \R, \\
x  & \longmapsto I_D(\bar{\mathscr{O}}_T(x)) 
\end{align*}
that is dependent on $T$. For any $x\in\mathcal{S}$, we wish to study the time average
\begin{equation}\label{eq:bar-lambda_D}
 \bar{\lambda}_D(x)=\lim_{T\to \infty} \frac{1}{T^k} I_D(\bar{\mathscr{O}}_T(x)),\qquad k=k(D).
\end{equation}
Naturally, we need to investigate if $\bar{\lambda}_D$ is a well-defined function on $\mathcal{S}$ and whether it is integrable.

Recall that $X(x)=\dot{\mathscr{O}}_T(x)$.  Given a smooth $k$--form $\varpi_D$ on $C[k;\R^3]$ as defined in \eqref{eq:varpi-D}, we have a global analog of the function $f_{D,K}$  in \eqref{eq:f_D,K}:
\begin{equation}\label{eq:f_D,X}
\begin{split}
 f_{D,X}: C(k;\mathcal{S})\longrightarrow \R,\qquad f_{D,X}:=\alpha^\ast\varpi_D(X,\cdots,X),
 \end{split}
\end{equation}
where the frame of fields $\{X,\ldots,X\}$ spans the tangent space to the product of orbits $\mathscr{O}(x_1)\times \cdots\times\mathscr{O}(x_k)$ through any point $(x_1,\cdots,x_k)$ in $\mathcal{S}^k$. 
It is convenient to think about the above constructions in terms of the underlying foliation $\mathscr{F}^k_X$ of $\mathcal{S}^k$ defined by the orbits of the action of the $k$--fold product flow $\phi_{X}^{k}$ on $\mathcal{S}^k$. Note that $\mathscr{F}^k_X$ has complete leaves because $X$ is tangent to $\partial \mathcal{S}$, and orbits $\mathscr{O}(x)$ thus exist for all time. The function 
$f_{D,X}$ is well-defined on $C(k;\mathcal{S})$, but, except for along the orbits, it generally blows up close to the diagonals of $\mathcal{S}^k$. We can also consider the function 
\begin{equation}\label{eq:tilde-f_D,X}
 \begin{split}
 \widetilde{f}_{D,X}: C_0(k;\mathcal{S})\longrightarrow \R,\qquad f_{D,X}:=\varpi_D(\widetilde{X},\cdots,\widetilde{X}),
 \end{split}
\end{equation}
where $\{\widetilde{X},\ldots,\widetilde{X}\}$ is a lift of the frame $\{X,\ldots,X\}$ of vector fields to $C[k;S]$. Note that even though $\varpi_D$ is smooth on $C[k;\mathcal{S}]$, the vector field $\widetilde{X}=\alpha_\ast X$ undergoes ``infinite stretching'' close to the boundary of $C[k;\mathcal{S}]$ (see Remark \ref{rem:lemma-illustration}). Clearly, $f_{D,X}$ factors as 
\[
 f_{D,X}=\widetilde{f}_{D,X}\circ\alpha.
\] 

Since $\bar{\mathscr{O}}_T(x)$ is parametrized by the interval $[0,T+1]$ (where $[T,T+1]$ parametrizes a 
short segment $\sigma(x,\phi_X(x,T))$), Proposition \ref{prop:bt-integral-f_D,K} applied to 
\eqref{eq:bar-lambda_D} pointwise yields
\begin{equation}\label{eq:bar-lambda_D2}
 \begin{split}
 I_D(\bar{\mathscr{O}}_T(x)) & =  \underbracket[0.5pt]{\int^{T+1}_0\cdots\int^{T+1}_0}_{k\text{ times}} f_{D,X}
((\phi\cup\sigma)(x,t_1),\cdots, (\phi\cup\sigma)(x,t_k))\,dt_1\cdots dt_k,\\
 \bar{\lambda}_D(x) & =\lim_{T\to \infty} \frac{1}{T^k}  I_D(\bar{\mathscr{O}}_T(x)). 
\end{split}
\end{equation}
Here $(\phi\cup\sigma)$ is a shorthand notation for the flow $\phi$ of $X$ followed by the short path parametrization.

This is thus the setup in which we will prove our main theorems in this section.  But before we can do that, we will establish a useful lemma.

\subsection{Key Lemma}

Here is the lemma that will be used in the proofs of Lemma \ref{lem:short-paths} and Theorems \ref{thm:main-knots} and \ref{thm:main-ergodic}.

\begin{klemma*}
Let  $\mu$ be the underlying measure on the domain $\mathcal S\subset \R^3$, invariant under the flow of $X$.  Consider the time average of $f_{D,X}$ over $\mathscr{F}^k_X$, defined as
\begin{equation}\label{eq:lambda_D}
 \lambda_D(x)=\lim_{T\to \infty} \frac{1}{T^k} \int^{T}_0\cdots\int^{T}_0 f_{D,X}
(\phi(x,t_1),\cdots, \phi(x,t_k))\,dt_1\cdots dt_k,\quad x\in \mathcal S,
\end{equation}
where in comparison to \eqref{eq:bar-lambda_D}, we skipped the integrals over short paths.  Then this limit exists almost everywhere on $\mathcal S$ and $\lambda_D$ is in $L^1(\mathcal S,\mu)$.
\end{klemma*}
Before we prove this, we need to make several observations. Note that $\mu$ induces a measure on $\mathcal{S}^k$ by the pushforward via the thin diagonal inclusion $\mathcal S\hookrightarrow \mathcal S^k$, $x\longrightarrow (x,\cdots,x)$.  Let us denote the resulting measure by $\mu_\Delta$. Clearly $\mu_\Delta$ is a finite Borel measure supported on the thin diagonal of $\mathcal{S}^k$. Averaging over the $\R^k$--action of $\phi^k=\phi^k_X$ we obtain a $\phi^k$--invariant measure 
\begin{equation}\label{eq:bar-mu-delta}
 \bar{\mu}_\Delta=\lim_{T\to \infty} \frac{1}{T^k} \int^{T}_0\cdots\int^{T}_0 \bigl((\phi^k)_\ast \mu_{\Delta}\bigr)\,dt_1\,\cdots\,dt_k.
\end{equation}
For the $k$--fold product $\mathcal{S}^k$, the above is a well defined limit in the space of Borel measures $\mathcal{M}(\mathcal{S}^k)$, c.f.~\cite{Billingsley99}. Note that $\bar{\mu}_\Delta$ is supported on the set of leaves of the foliation $\mathscr{F}^k_X$ intersecting the thin diagonal in $\mathcal{S}^k$. From the definitions, we may write $\int_{\mathcal S}\lambda_D d\mu$ as 
\begin{equation}\label{eq:lambda_D=f_D,X}
\begin{split}
 \int_{\mathcal S}\lambda_D \mu & =\int_{\mathcal{S}^k}\Bigl(\lim_{T\to \infty} \frac{1}{T^k} \int^{T}_0\cdots\int^{T}_0 f_{D,X}
(\phi(x_1,t_1),\cdots, \phi(x_k,t_k))\,dt_1\cdots dt_k \Bigr)d\mu_\Delta\\
 & =\int_{\mathcal{S}^k} f_{D,X}\,d\bar{\mu}_\Delta,
\end{split}
\end{equation}
where in the third identity we used \eqref{eq:bar-mu-delta}. Therefore  the question of whether $\lambda_D$ is in $L^1(\mathcal S,\mu)$ is equivalent to the question of whether $f_{D,X}$ of \eqref{eq:f_D,X} is in $L^1(\mathcal S^k,\bar{\mu}_\Delta)$. 
\begin{remark}
{\rm
 In place of $\bar{\mu}_\Delta$ one can consider any other invariant measure,  in particular we may restrict just to any measure supported on
 the $k$--product $\mathscr{O}(x)\times\cdots\times \mathscr{O}(x)$ of a single long orbit, or equivalently obtained by averaging, as in \eqref{eq:bar-mu-delta}, a Dirac delta measure of a point $(x,\cdots,x)\in \mathcal{S}^k$. It is well known (c.f.~\cite{Billingsley99, Candel-Conlon00}) that $\bar{\mu}_\Delta$ can be arbitrarily well approximated by finite sums of such Dirac delta averages.  (We will use this fact in Section \ref{sec:q-helicity}.)
}
\end{remark}

\no In order to investigate integrability of $f_{D,X}$, we employ the following natural generalization of \cite[Proposition 10.3.2]{Candel-Conlon00} to a product of flows.
\begin{proposition}[\cite{Candel-Conlon00}]\label{prop:holonomy-invariant}
 Any $\phi^k$--invariant measure $\mu$ on $\mathcal{S}^k$ corresponds to  a holonomy invariant measure of the foliation
$\mathscr{F}^k_X$.  
\end{proposition}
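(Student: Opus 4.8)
The plan is to reduce this global statement to the local structure of the foliation $\mathscr{F}^k_X$ in flow-box charts, and to exhibit the holonomy-invariant transverse measure by a \emph{leafwise disintegration} of $\mu$. Since $X$ is nonvanishing and tangent to $\partial\mathcal{S}$, its orbit foliation $\mathscr{F}_X$ of $\mathcal{S}$ is a genuine one-dimensional foliation admitting flow-box charts $D_a\times(-\epsilon,\epsilon)$ in which $\phi_X$ acts by translation of the second coordinate (near $\partial\mathcal{S}$ one uses boundary flow boxes, which exist precisely because $X$ is tangent to the boundary). Taking $k$-fold products of such charts produces flow-box charts for $\mathscr{F}^k_X$ of the form $D\times(-\epsilon,\epsilon)^k$, with $D=D_{a_1}\times\cdots\times D_{a_k}$ a local transversal, in which $\phi^k=\phi^k_X$ acts as translation in the coordinates $(t_1,\dots,t_k)$. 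The claimed correspondence will assign to $\mu$ the family of measures obtained by disintegrating $\mu$ over these transversals.

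The key local step is as follows. Let $\mu$ be a $\phi^k$-invariant Borel measure; restricted to a chart $D\times(-\epsilon,\epsilon)^k$ it is invariant under the $\R^k$-translation action on the second factor. I would invoke the disintegration theorem together with the uniqueness of translation-invariant measures on $\R^k$ to conclude that
\[
\mu|_{D\times(-\epsilon,\epsilon)^k}=\nu_D\otimes (dt_1\cdots dt_k),
\]
where $dt_1\cdots dt_k$ is Lebesgue measure in the leaf directions and $\nu_D$ is a finite Borel measure on the transversal $D$. Concretely, invariance under each one-parameter translation forces the conditional measures along leaves to be translation-invariant on $(-\epsilon,\epsilon)^k$, hence proportional to Lebesgue, and the proportionality, integrated transversally, defines $\nu_D$. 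This is exactly the $k$-dimensional generalization of the one-dimensional disintegration underlying \cite[Proposition 10.3.2]{Candel-Conlon00}.

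I would then define the transverse measure by setting it equal to $\nu_D$ on each local transversal $D$ and verify holonomy invariance. The holonomy pseudogroup of $\mathscr{F}^k_X$ is generated by the transition maps between overlapping product flow boxes, and each such holonomy map is realized by sliding along leaves, i.e.\ by an element of $\phi^k$. Since $\mu$ is $\phi^k$-invariant and the leafwise factor is translation-invariant Lebesgue measure, the transverse factors $\nu_D$ on two transversals related by holonomy must coincide, which is precisely the defining property of a holonomy-invariant transverse measure of $\mathscr{F}^k_X$. Conversely, given such a transverse measure one reconstructs a $\phi^k$-invariant measure by integrating Lebesgue measure along leaves inside each flow box and patching via a foliated partition of unity, consistency being guaranteed by holonomy invariance; the two assignments are mutually inverse, which establishes the correspondence.

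The main obstacle I anticipate is that one \emph{cannot} deduce the statement by taking a global product of $k$ copies of the one-dimensional correspondence: a $\phi^k$-invariant measure on $\mathcal{S}^k$ need not be a product of invariant measures on $\mathcal{S}$ (indeed the relevant measure $\bar{\mu}_\Delta$ from \eqref{eq:bar-mu-delta} is supported on the leaves meeting the thin diagonal and is highly non-product). Hence the disintegration must be carried out genuinely locally in the product flow-box charts, and one must check that it is independent of the chosen chart and yields a well-defined, locally finite Borel transverse measure. A secondary point requiring care is the behavior near $\partial\mathcal{S}$ and the corners of $\mathcal{S}^k$, where the foliation has boundary leaves; the tangency of $X$ to $\partial\mathcal{S}$ guarantees that the (boundary) flow-box picture persists there, so no separate argument is needed.
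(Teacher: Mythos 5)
Your proposal is correct and coincides with the argument the paper relies on: the paper gives no proof of this proposition, deferring entirely to \cite[Proposition 10.3.2]{Candel-Conlon00}, whose proof is exactly the flow-box disintegration you carry out, generalized from a single flow to the product $\R^k$-action (which is locally free since $X$ is nonvanishing, so the product flow-box charts you use do exist, including at the boundary strata of $\mathcal{S}^k$). The one point to phrase slightly more carefully is that a holonomy transformation of $\mathscr{F}^k_X$ is realized by flowing for a \emph{transversal-dependent} time $\tau(x)$ rather than by a single element of $\phi^k$; this is harmless, since in each chart the leafwise transition map is a fiberwise translation preserving $dt_1\cdots dt_k$, so uniqueness of the disintegration still forces the transverse factors on overlapping charts to be matched by the holonomy map, as you claim.
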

\no Let us choose a finite regular foliated atlas for $\mathscr{F}^k_X$ where a domain $V^{\pmb\alpha}$, $\pmb\alpha=(\alpha_1,\cdots,\alpha_k)$, of each each chart is  a product of regular flow boxes $\{V_\alpha\}$ of the vector field $X$ covering $\mathcal S$.  In other words, 
\begin{equation}\label{eq:V^alpha}
 V^{\pmb\alpha}=V_{\alpha_1}\times\cdots \times V_{\alpha_k},\qquad  V_{\alpha_i}=\mathcal{T}_{\alpha_i}\times I_{\alpha_i},\quad I_{\alpha_i}=(-\epsilon_{\alpha_i},\epsilon_{\alpha_i}),\quad 0<\epsilon_{\alpha_i}<\epsilon,
\end{equation}
where each $\mathcal{T}_{\alpha_i}$ is a transverse disk to the flow of $X$. $V^{\pmb\alpha}$ can be expressed as the product 
\[
 V^{\pmb\alpha}=\mathcal{T}^{\pmb\alpha}\times I^{\pmb\alpha}=\bigl(\prod_{i} \mathcal{T}_{\alpha_i}\bigr)\times \bigl(\prod_{i} I_{\alpha_i}\bigr).
\]
 Recall from \cite{Candel-Conlon00} that a holonomy invariant measure $\nu_{\mathscr{F}}$ of $\mathscr{F}=\mathscr{F}^k_X$ is a measure defined on $\bigsqcup_{\pmb\alpha} \mathcal{T}^{\pmb\alpha}$ that is invariant under the action of the holonomy pseudogroup of $\mathscr{F}$. The Rulle--Sullivan Theorem \cite{Ruelle-Sullivan75} (see also \cite[p. 245]{Candel-Conlon00}) and Proposition \ref{prop:holonomy-invariant} imply the existence of a  holonomy invariant (finite) measure  $\nu_{\mathscr{F}}$ corresponding to $\bar{\mu}_\Delta$, given in \eqref{eq:bar-mu-delta}, such that 
\begin{equation}\label{eq:lambda_D-current}
 \int_{\mathcal{S}^k} f_{D,X}\,\bar{\mu}_\Delta=\sum_{\alpha} \int_{\mathcal{T}^{\pmb\alpha}} \Bigl(\int_{I^{\pmb\alpha}} \xi_{\alpha}(\mathbf{x},\mathbf{t}) f_{D,X}(\mathbf{x},\mathbf{t}) d\mathbf{t}\Bigr) d\nu_{\mathscr{F}}(\mathbf{x}),
\end{equation}
where $(\mathbf{x},\mathbf{t})=(x_1,\cdots x_k,t_1,\cdots, t_k)$ are coordinates on $V^{\pmb\alpha}$, $\{\xi_{\pmb\alpha}\}$ a partition of unity subordinate to the cover of $\mathcal{S}^k$ by $\{V^{\pmb\alpha}\}$, and $d\mathbf{t}$ is induced from the usual Riemannian length measure along the orbits of $X$.

\begin{remark}[Illustration for proof of Key Lemma]\label{rem:lemma-illustration}
{\rm 
Let us illustrate our strategy in the case of the simplest diagram $D=\includegraphics[scale=.07]{writhe2}$. For a fixed $\phi_X$--invariant measure $\mu$ on $\mathcal{S}$, the question is whether the following time average is $\mu$--integrable:
\[
 \lambda_D(x)=\lim_{T\to \infty} \frac{1}{T^2} I_D(\mathscr{O}_T(x)),\qquad x\in\mathcal{S}
\]
For simplicity, here we disregard short paths. 


Considering a finite cover of $\mathcal{S}^2$ by flowboxes $V^{\pmb\alpha}$ (defined for $k=2$ in \eqref{eq:V^alpha}), formula \eqref{eq:lambda_D-current} tells us that it suffices to prove that 
$f_{D,X}$ is locally integrable with respect to $d\mathbf{t}\times\nu_{\mathscr{F}}$ in each $V^{\pmb\alpha}$. Away from the diagonal $\Delta_{\{1,2\}}$ of $\mathcal{S}^2$, $f_{D,X}$ is smooth, and so the hardest case is that of flowboxes intersecting $\Delta_{\{1,2\}}$. Without loss of generality consider a flowbox $V^{\pmb\alpha}$, $\pmb\alpha=(\alpha_1,\alpha_2)$ with $\alpha_1=\alpha_2$. To simplify the notation we denote it by $V=(\mathcal{T}\times I)\times (\mathcal{T}\times I)$, 
(where $\mathcal{T}\times I$ is a flowbox of $X$ in $\mathcal{S}$, with $I=(-\epsilon,\epsilon)$). Let $F:\mathcal{T}\times\mathcal{T}\to \R$ be defined by 
\begin{equation}\label{eq:F-int}
F(x,y)=\int_{I(x,y)} f_{D,X}(x,y,\mathbf{t}) d\mathbf{t},
\end{equation}
where 
\[
 I(x,y)=\{x\}\times\{y\}\times I\times I\subset V,\qquad I=(-\epsilon,\epsilon).
\] 
Since $C[2;\mathcal{S}]\subset C[2;\R^3]$ and $C[2;\R^3]$ is obtained by blowing up the diagonal $\Delta_{\{1,2\}}$ of $(\R^3)^2$, we can construct the metric $\widetilde{g}$ on $C[2;\R^3]$ from the standard Euclidean metric of $\R^3$ and pull it back to $C[2;\mathcal{T}\times I]$ via the map $\widetilde{\phi^2}$ induced from the product flow $\phi^2=\phi\times\phi$. The resulting metric on $C[2;\mathcal{T}\times I]$ will also be denoted by $\widetilde{g}$.
Recall that $\varpi_D=\varpi_{\includegraphics[scale=.06]{writhe2}}$ is a smooth $2$--form on $C[2;\mathcal{S}]\subset C[2;\R^3]$, defined via the Gauss map in \eqref{eq:h_D}.  Thanks to Proposition \ref{prop:conf-funct}, it pulls back to a smooth form on $C[2;\mathcal{T}\times I]$. The resulting pullback form will also be denoted by $\varpi_D$. In the case of configurations of two points, $C[2;\mathcal{T}\times I]$, the blowup map \eqref{eq:alpha_M} can be set equal  to the map defined in \eqref{eq:blow-up-map}, namely
\[
 \beta:C(2;\mathcal{T}\times I)\longrightarrow C[2;\mathcal{T}\times I]=\operatorname{Bl}(\mathcal{T}\times I, \Delta_{\{1,2\}})
\]
Equations \eqref{eq:f_D,X} and \eqref{eq:tilde-f_D,X} imply
\begin{equation}\label{eq:F}
 F(x,y)=\int_{I(x,y)} \alpha^\ast\varpi_D=\int_{\widetilde{I(x,y)}} \varpi_D,
\end{equation}
where  $\widetilde{I(x,y)}$ is the lift of $I(x,y)\subset V$ to  $C[2;\mathcal{T}\times I]$. Using the metric $\widetilde{g}$, Lemma \ref{lem:A-estimate} yields 
\begin{equation}\label{eq:|F|}
  |F(x,y)| \leq A_{\varpi_D,\widetilde{g}} \operatorname{vol}(\widetilde{I(x,y)}).
\end{equation}
\smallskip
\no {\em Claim:} Volumes of lifts $\widetilde{I(x,y)}$ in the metric $\widetilde{g}$ are uniformly bounded over $\mathcal{T}\times \mathcal{T}$. 
\smallskip

Given the claim, estimate \eqref{eq:|F|} implies that $F$ is pointwise bounded and thus $\nu_{\mathscr{F}}$--integrable (because $\nu_{\mathscr{F}}$ is a finite measure). Applying this argument to each flow box chart $\{V^{\pmb\alpha}\}$, we can conclude 
 that $f_{D,X}$ is $\bar\mu_{\Delta}$--integrable as required. 
\begin{remark} 
{\rm One can regard the above reasoning as an alternative to the proof of Lemma 2.4 in \cite[p. 1429]{Contreras-Iturriaga99}.} 
\end{remark}
\no {\em Justification of Claim:} The claim is intuitively clear, because the blowup map $\beta$ ``stretches'' $I(x,y)$ locally by adding a ``bump'' (which is illustrated on the right side of Figure \ref{fig:blow-up}).  To give a more precise argument, recall that $C[2;\mathcal{T}\times I]$ is a subspace of $C[2;\R^2\times \R]$ 
and $C[2;\R^2\times \R]$ is diffeomorphic to $(\R^2\times \R)\times S^2\times [0,\infty)$, (i.e.~it is the complement of a tubular neighborhood of the thin diagonal). The blowup map 
\[
\beta:\Bigl((\R^2\times \R)^2-\Delta_{\{1,2\}}\Bigr)\longrightarrow \Bigl(C[2;\R^2\times \R]-\beta^{-1}(\Delta_{\{1,2\}})\Bigr)\cong C(2;\R^2\times\R),
\]
can be written explicitly as 
\[ 
 \beta:((x,s),(y,t))\longmapsto \Bigl(\frac{x+y}{2},\frac{s+t}{2},\frac{(s-t,x-y)}{\sqrt{(s-t)^2+|x-y|^2}},\frac{1}{2} \sqrt{(s-t)^2+|x-y|^2}\Bigr).
\] 
Let $(x,y)\in \mathcal{T}\times \mathcal{T}$, $x\neq y$, and set $p=\frac{1}{2}(x+y)$, $q=\frac{1}{2}(x-y)$, conveniently changing variables to $u=\frac{1}{2}(s+t)$, $v=\frac{1}{2}(s-t)$, $s,t\in (-\epsilon,\epsilon)$.  We obtain for $(u,v)\in (-\epsilon,\epsilon)\times (-\epsilon,\epsilon)$
\begin{equation}\label{eq:beta-u,v}
\begin{split}
((x,u),(y,v)) & \longmapsto \Bigl(p,u;\frac{(v,q)}{\sqrt{v^2+|q|^2}}, \sqrt{v^2+|q|^2}\Bigr)=:(\psi_p(u);\psi_q(v)), 
\end{split}
\end{equation}
which, for a fixed $x$ an $y$, gives a $(u,v)$--parametrization of the lift $\widetilde{I(x,y)}$.  Here $\psi_p$, $\psi_q$ denotes the curves given by respectively first  and last two coordinates of the map \eqref{eq:beta-u,v}. The volume $\operatorname{vol}(\widetilde{I(x,y)})$ can now be estimated as
\[
\operatorname{vol}(\widetilde{I(x,y)})\leq c_{\widetilde{g}}\,\ell(\psi_p)(\ell(\psi_q)+2\pi\epsilon),
\]
where $\ell(\psi_p)$, $\ell(\psi_q)$ are lengths of the curves $\psi_p$ and $\psi_q$ in the metric $\widetilde{g}$ and $c_{\widetilde{g}}$ is a constant which depends only on $\widetilde{g}$. Lengths $\ell(\psi_p)$ and $\ell(\psi_q)$ are proportional to $\epsilon$ and thus $(x,y)\longmapsto\operatorname{vol}(\widetilde{I(x,y)})$ is uniformly bounded on $\mathcal{T}\times\mathcal{T}$ by a constant dependent only on the metric and the size of the flow box neighborhood.  }\qed
\end{remark}
\begin{figure}[htbp]
	\centering
		\frame{\includegraphics[scale=.42]{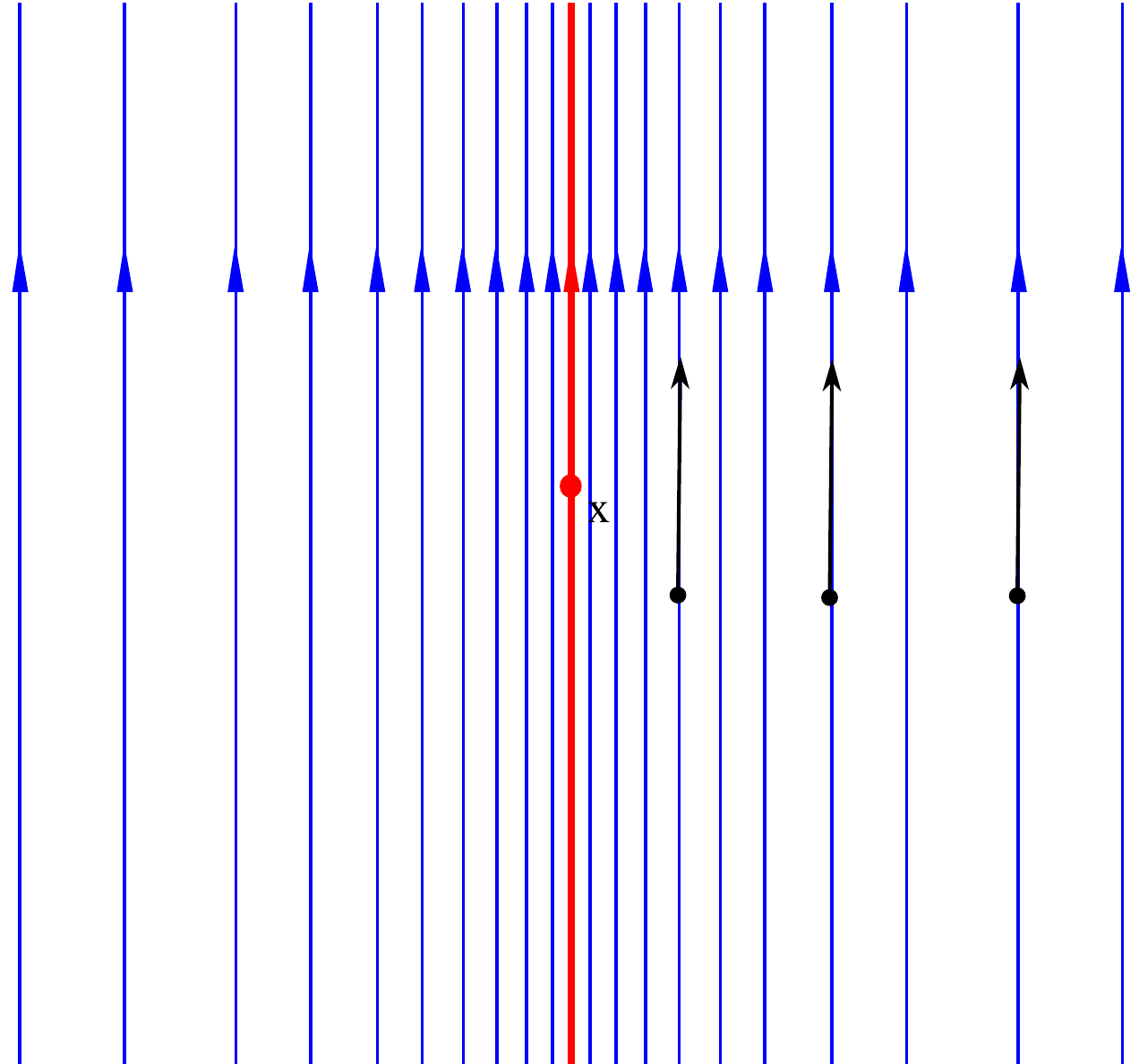}}\ \qquad
		\frame{\includegraphics[scale=.4]{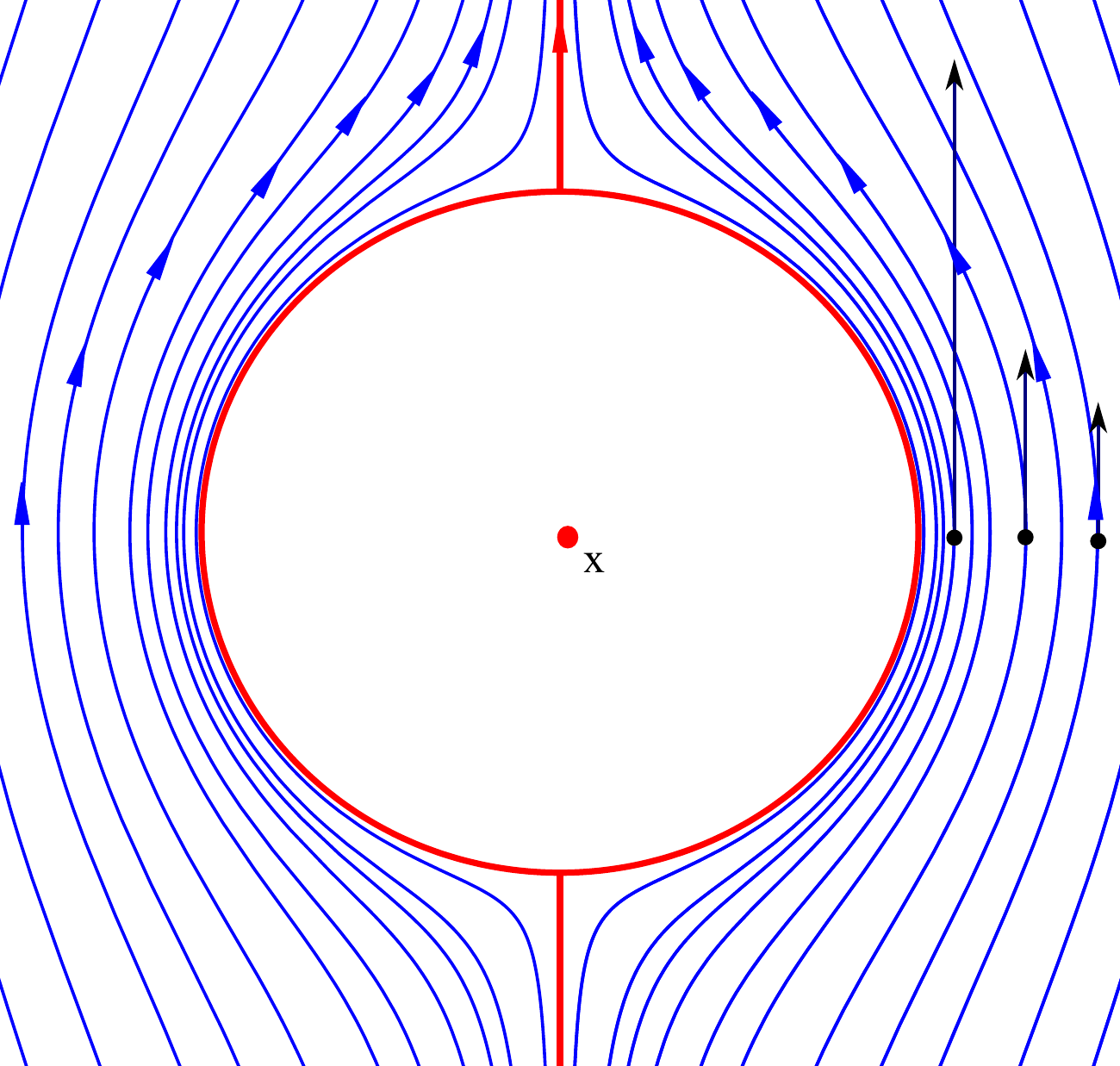}}
	\caption{{\small Lift of orbits of a vector field on $\R^2$  (left) to $\operatorname{Bl}(\R^2,\{x\})\cong S^1\times [1,+\infty)$ (right).}}\label{fig:blow-up} 
\end{figure}
\begin{proof}[Proof of Key Lemma]
Fix a flowbox chart $V^{\pmb\alpha}=\mathcal{T}^{\pmb\alpha}\times I^{\pmb\alpha}$ as defined in \eqref{eq:V^alpha}. It suffices to prove that the function 
\begin{align} 
F_{D,\pmb\alpha}\colon \mathcal{T}^{\pmb\alpha} & \longrightarrow \R \label{eq:F_D} \\
\mathbf{x} & \longmapsto \int_{I^{\pmb\alpha}}  f_{D,X}(\mathbf{x},\mathbf{t}) d\mathbf{t} \notag
\end{align}
is bounded for any $\pmb\alpha$. Then, because the atlas $\{V^{\pmb\alpha}\}$ is finite, \eqref{eq:lambda_D-current} implies $|\int_{\mathcal{S}^k} f_{D,X}\,\bar{\mu}_\Delta|\leq \infty$ as required.
Note that  $f_{D,X}$ is smooth away from the diagonals $\Delta_Q$ of $\mathcal{S}^k$, where $Q\subset \{1,\cdots, k\}$, $\#Q\geq 2$, and
\[
 \Delta_Q=\{(x_1,\cdots,x_k)\in \mathcal{S}^k\ |\ x_i=x_j\ \text{for}\ i,j\in Q\}.
\]
Let $\Delta=\bigcup_Q \Delta_Q$. We can cover the $\epsilon$--neighborhood of $\Delta$ by open sets 
\begin{equation}\label{eq:V(alpha,beta)}
\begin{split}
 V^{\pmb\alpha}_Q=\prod^k_{i=1}  V_{\alpha_i}, & \quad \text{where}\quad \alpha_i=\alpha_j,\quad\text{for}\quad i,j\in Q,\\
 &\quad\text{and}\quad \overline{V}_{\alpha_i}\cap \overline{V}_{\alpha_j}=\text{\O},\quad \text{for}\quad i\in Q,\ j\not\in Q.
\end{split}
\end{equation}
Then the sup--norm of $f_{D,X}$ is bounded on the complement of the $\varepsilon$--neighborhood of $\Delta$ by some constant which only depends on $X$ and $\varpi_D$; see \eqref{eq:f_D,X}. Generally, we want to pick $\varepsilon$ much smaller than $\epsilon$, which is the size of the flow box charts.
Thus, it suffices to prove that the functions $F=F_{D,\pmb\alpha}$ are bounded on $V^{\pmb\alpha}_Q$ for any $\pmb\alpha$ and $Q$.  Up to a permutation of factors, suppose that $Q=\{1,...,r\}\subset \{1,\cdots,k\}$, $2\leq r\leq k$, $r=\#Q$ and 
$\pmb\alpha=(\alpha_1,\cdots,\alpha_{r},\beta_1,\cdots,\beta_{k-r})$ (for $\alpha=\alpha_1=\cdots=\alpha_{r}$). Then $V^{\pmb\alpha}_Q=\mathcal{T}^r\times I^r\times \mathcal{T}^\beta\times I^\beta$, where
\[
\mathcal{T}=\mathcal{T}_{\alpha},\quad  I=I_{\alpha},\quad \mathcal{T}^\beta=\mathcal{T}_{\beta_1}\times\cdots\times \mathcal{T}_{\beta_{k-r}},\quad  \text{and}\quad  I^\beta=I_{\beta_1}\times\cdots\times I_{\beta_{k-r}}. 
\]
 Proposition \ref{prop:conf-funct} implies that the flow $\phi=\phi_X$ of $X$ restricted to the flow box $\mathcal{T}\times I$ lifts to  an embedding 
$\widetilde{\phi}:C[r;\mathcal{T}\times I]\longrightarrow C[r;\R^3]$, which, by the second condition 
in \eqref{eq:V(alpha,beta)}, extends trivially to the embedding 
\[
 \widetilde{\phi}_{Q}:W^{\pmb\alpha}_Q\longrightarrow C[k;\R^3],\qquad W^{\pmb\alpha}_Q:=C[r;\mathcal{T}\times I]\times \mathcal{T}^{\beta}\times I^{\beta}.
\]
Let $\overline{\alpha}_{Q}:W^{\pmb\alpha}_Q\longrightarrow V^{\pmb\alpha}_Q$ be the obvious projection induced by the restriction of the blowdown map (of Definition \ref{D:Compactification}). Recall that  
for any  $\mathbf{x}\in \mathcal{T}^r\times \mathcal{T}^\beta$, the lift $\widetilde{I_Q(\mathbf{x})}$ of $I_Q(\mathbf{x})=\{\mathbf{x}\}\times I^r\times I^\beta$ to $W^{\pmb\alpha}_Q$ equals to the closure of $\overline{\alpha}^{-1}_{Q}(I_{Q}(\mathbf{x}))$ in $W^{\pmb\alpha}_Q$.
The $k$--form $\varpi_D$ \eqref{eq:varpi-D} pulls back to a smooth form on $W^{\pmb\alpha}_Q$, and we may also pull back the metric $\widetilde{g}$ from $C[k;\R^3]$ to $W^{\pmb\alpha}_Q$. By \eqref{eq:f_D,X} and \eqref{eq:F_D}, a point value of $F$, for any $\mathbf{x}\in \mathcal{T}^r\times \mathcal{T}^\beta$, is given as
\begin{equation}\label{eq:F-int-w_D-lift}
 F(\mathbf{x})=\int_{\widetilde{I_Q(\mathbf{x})}} \varpi_{D}.
\end{equation}
Using Lemma \ref{lem:A-estimate}, for some universal constant $A_{D}$ we obtain a bound 
\begin{equation}\label{eq:F-vol-est}
|F(\mathbf{x})|\leq A_{D} \operatorname{vol}(\widetilde{I_Q(\mathbf{x})}).
\end{equation}
\no Therefore, analogously to what is outlined in  Remark \ref{rem:lemma-illustration}, it suffices to show that $\operatorname{vol}(\widetilde{I_Q(\mathbf{x})})$ is uniformly bounded over $\mathcal{T}^r\times \mathcal{T}^\beta$. This is intuitively clear because $\operatorname{vol}(I_Q(\mathbf{x}))$ is uniformly bounded by a constant proportional to $\epsilon^k$ (c.f.~\eqref{eq:V^alpha}) and $C[k;\R^3]$ is obtained from $(\R^3)^k$ by a sequence of blowups.  Hence the philosophy presented in Remark \ref{rem:lemma-illustration} implies that $\operatorname{vol}(\widetilde{I_Q(\mathbf{x})})$ is uniformly bounded as well. The remaining part of this proof provides details of this intuitive claim.

Summarizing, given a bounded ``flow box'': $W^{\pmb\alpha}_Q=C[r;\mathcal{T}\times I]\times \mathcal{T}^\beta\times I^\beta$ embedded via the flow in $C[k;\R^3]$, we intend to estimate the volume of the lift of $I_Q(\mathbf{x})=\{\mathbf{x}\}\times I^r\times I^\beta\subset (\mathcal{T}^r\times \mathcal{T}^\beta)\times I^r\times I^\beta=V^{\pmb\alpha}_Q$ to $W^{\pmb\alpha}_Q$ for every $\mathbf{x}\in \mathcal{T}^r\times \mathcal{T}^\beta$. Specifically, we consider  $V^{\pmb\alpha}_Q$ sitting in $\R^2\times \R$ where the $I$ factors of $V^{\pmb\alpha}_Q$ are mapped into the $\R$ factors under the inclusion, and the corresponding blowup map
\begin{equation}\label{eq:compact-map}
\begin{split}
 \alpha:C(k;\R^2\times \R) & \longrightarrow A[k;\R^2\times \R],\\
 A[k;\R^2\times \R] & =(\R^2\times \R)^k\times \prod_{S\subset\{1, ..., k\},\ |S|\geq 2} \operatorname{Bl}((\R^2\times \R)^S,\Delta_S).
\end{split}
\end{equation}
Recall from Section \ref{sec:conf-integrals} that  $C[k;\R^2\times \R]$ is obtained as a closure of the graph of the above map. The projection of the map $\alpha$ to the first factor of $A[k;\R^2\times \R]$ is just the inclusion and the projections restricted to the $\operatorname{Bl}(\,\cdot\,)$ factors are determined by the blowup maps as in \eqref{eq:blow-up-map}. The metric on each $\operatorname{Bl}_S=\operatorname{Bl}((\R^2\times \R)^k,\Delta_S)$, further denoted by $\widetilde{g}_S(\varepsilon)$, is obtained via the construction of Section \ref{sec:blowups}. The parameter $\varepsilon$ is set to be sufficiently smaller then the diameter of the flowbox chart. Recall that  $\operatorname{Bl}_S$ is diffeomorphic to the complement of a tubular neighborhood of the thin diagonal $\Delta_S$ in $(\R^2\times \R)^S$, namely
\begin{equation}\label{eq:bl-diff}
 \operatorname{Bl}_S\cong(\R^2\times \R)\times \mathbb{S}^{3 |S|-4}\times [0,\infty).
\end{equation}
and the map $\alpha$ restricted to factors of $A[k;\R^2\times\R]$, indexed by $S=\{s_1,\cdots,s_{|S|}\}$,  can be specifically chosen as
\begin{equation}\label{eq:FM-restriction}
 \mathbf{y}=(y_1,\cdots,y_{|S|}) \longmapsto \Bigr(y_1;\frac{y_1-y_2}{|\mathbf{y}'|},\cdots,\frac{y_1-y_{|S|}}{|\mathbf{y}'|};|\mathbf{y}'|\Bigl),\quad \mathbf{y}'=(y_1-y_2,\cdots, y_1-y_{|S|}).
\end{equation}
This gives an embedding into the interior of $\operatorname{Bl}_S$, i.e.~into $(\R^2\times \R)\times \mathbb{S}^{3 |S|-4}\times (0,\infty)$.


For simplicity, suppose $\mathbf{x}\in \mathcal{T}^k\cong \mathcal{T}^r\times\mathcal{T}^\beta$ and $\mathbf{x}\in C(k;\mathcal{T})$, i.e.~$\mathbf{x}$ is away from the thick diagonal of $\mathcal{T}^k$. The restriction of the map $\alpha$ to $I(\mathbf{x})=\{\mathbf{x}\}\times I^k$,\footnote{Where we abbreviate $I^r\times I^\beta$ to $I^k$ with $k=r+|\beta|$.}  gives a parametrization of the lift $\widetilde{I(\mathbf{x})}$ in $C[k;\R^2\times\R]$. Let us denote this parametrization by $\gamma(t_1,\cdots,t_k)$, where $\mathbf{t}=(t_1,\cdots,t_k)$ are the variables of $I^k$ and $\widetilde{I(\mathbf{x})}=\gamma(I(\mathbf{x}))$.  Further let
\[
 X_i(\mathbf{t})=\frac{\partial}{\partial t_i} \gamma(\mathbf{t}),\qquad i=1,\cdots,k,
\]
be images  in $A[k;\R^2\times \R]$ of coordinate vector fields under the derivative $D \gamma$. Then, for $\widetilde{I_Q(\mathbf{x})}=\widetilde{\phi}\circ \alpha(I(\mathbf{x}))$ we have
\begin{equation}\label{eq:vol-lift}
 \operatorname{vol}(\widetilde{I_Q(\mathbf{x})})\leq c_{\phi}\operatorname{vol}(\widetilde{I(\mathbf{x})})=c_{\phi}\int_{I^k} (|X_1\wedge\cdots\wedge X_k|_{\widetilde{g}})^{\frac{1}{2}}\,d\mathbf{t},
\end{equation}
where $c_{\phi}$ accounts for the $C^1$--norm of the map $\widetilde{\phi}$. Each vector $X_i$ has coordinates $(X^j_i,X^S_i)$, where $j=1,\cdots,k$ indexes  factors: $(\R^2\times\R)^k$ in $A[k;\R^2\times \R]$ and $S\subset\{1, ..., k\}$, $|S|\geq 2$, indexes the $\operatorname{Bl}_S$ factors, we call $j$ the {\em front index} and $S$ the {\em set index} in the above decomposition of $X_i$. Substituting $X_i=\sum_{\pmb{m}} X^{\pmb{m}}_i$, where $\pmb m$ ranges over both $j$ and $S$ type indices, we estimate
\begin{equation}\label{eq:volume-wedge-expansion}
\begin{split}
 |X_1\wedge\cdots\wedge X_k|_{\widetilde{g}} & \leq \sum_{\pmb{m}=(\pmb{m}_1,\pmb{m}_2,\cdots,\pmb{m}_k)} |X^{\pmb{m}_1}_1\wedge\cdots\wedge X^{\pmb{m}_k}_k|_{\widetilde{g}}\\
 & \leq \sum_{\pmb{m}} \prod^k_{l=1} |X^{\pmb{m}_l}_l|_{\widetilde{g}} \leq \sum_{\pmb{m}} \frac{1}{k} \sum^k_{l=1} \bigl(|X^{\pmb{m}_l}_l|_{\widetilde{g}}\bigr)^k,
\end{split}
\end{equation}
where the last step is a consequence of the arithmetic mean and Jensen's inequality (c.f.~\cite{Lieb-Loss-book}). Consequently, estimating the integral
in \eqref{eq:vol-lift}, boils down to estimating integrals in the form 
\[
\mathcal{I}(\pmb{m}_l)=\int_{I^k} \bigl(|X^{\pmb{m}_l}_l|_{\widetilde{g}}\bigr)^k\, d\mathbf{t},\qquad l=1,\cdots, k.
\]
Without loss of generality (as we may always change the order of integration) suppose $l=1$, and let
\[
\mathcal{I}=\int_I\cdots\Bigl(\int_I \bigl(|X^{\pmb{m}_1}_1(t_1,\cdots,t_k)|_{\widetilde{g}}\bigr)^k\, dt_1\Bigr)\cdots dt_k.
\]
For a fixed $\mathbf{t}_0=(t_2,\cdots,t_k)$, the inner integral:
\[
 E_k(\gamma_{\pmb{m}_1})=\int_I \bigl(|X^{\pmb{m}_1}_1(t_1,\mathbf{t}_0)|_{\widetilde{g}}\bigr)^k\, dt_1,
\]
represents the $L^k$--energy\footnote{i.e.~the $L^k$--norm to the $k$th power.} of the curve parametrized by $\gamma_{\pmb{m}_1}:t_1\longrightarrow \pi_{\pmb{m}_1}(\gamma(t_1,\mathbf{t}_0))$, where $\pi_{\pmb{m}_1}$ is a projection onto the $\pmb{m}_1$--coordinate of $A[k;\R^2\times\R]$. If $\pmb{m}_1$ is a front index then $|X^{\pmb{m}_1}_1(t_1,\mathbf{t}_0)|_{\widetilde{g}}=1$ and $E_k(\gamma_{\pmb{m}_1})\leq \ell(\gamma_{\pmb{m}_1})$. Since $\ell(\gamma_{\pmb{m}_1})\leq c_\epsilon \epsilon$, for some constant $c_\epsilon> 0$, we obtain 
\[
\mathcal{I}(\pmb{m}_1)\leq (c_\epsilon \epsilon)^k.
\]
 In the case $\pmb{m}_1=S$ is a set index, the map  $\gamma_{\pmb{m}_1}$ parametrizes 
a curve in $\operatorname{Bl}_S$, which projected via  \eqref{eq:FM-restriction} onto the 
$\mathbb{S}^{3 |S|-4}$ factor is a ''piece'' of a great circle. Then a simple computation in the metric $\widetilde{g}_S(\varepsilon)$ leads to the following estimate
\[
 E_k(\gamma_{\pmb{m}_1})\leq c_{k,\epsilon} (2\pi+1)^k\epsilon^k,
\]
where we used $\varepsilon\ll\epsilon$, again $\mathcal{I}(\pmb{m}_1)$ is uniformly bounded. Applying $\int_{I^k}\,(\cdot)$ to both sides of  \eqref{eq:volume-wedge-expansion} we obtain from \eqref{eq:vol-lift} that $\operatorname{vol}(\widetilde{I_Q(\mathbf{x})})$ is estimated by a sum of $\mathcal{I}$--type terms. Therefore using estimates for $\mathcal{I}(\pmb{m}_1)$ we obtain the required uniform bound 
\[
 \operatorname{vol}(\widetilde{I_Q(\mathbf{x})})\leq  c_{k,\epsilon,\phi} (1+2\pi\epsilon)^k (\epsilon)^{2k}.
\]

In the case $\mathbf{x}$ belongs to the thick diagonal of $\mathcal{T}^k$, we obtain the above bound by considering $\mathbf{x}$ as a limit of points from $C(k;\mathcal{T})$.  
\end{proof}

\subsection{Short paths}

 We are now ready to show that the short paths do not contribute to the limits in \eqref{eq:bar-lambda_D} and \eqref{eq:bar-lambda_D2}.  
\begin{lemma}\label{lem:short-paths}
We have 
\begin{equation}\label{eq:lamb_D=bar-lamb_D}
 \bar{\lambda}_D(x)=\lambda_D(x),\qquad \text{a.e.}
\end{equation}
\end{lemma}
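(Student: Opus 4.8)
The plan is to subtract the two averages region by region, exploiting the fact that short paths occupy only a bounded parameter interval. Recall from \eqref{eq:bar-lambda_D2} that, with $\bar{\mathscr{O}}_T(x)$ parametrized by $[0,T+1]$ (where $[0,T]$ carries the flow $\phi$ and $[T,T+1]$ the short path $\sigma$),
\[
 I_D(\bar{\mathscr{O}}_T(x))=\int_{[0,T+1]^k} f_{D,X}\bigl((\phi\cup\sigma)(x,t_1),\cdots,(\phi\cup\sigma)(x,t_k)\bigr)\,d\mathbf{t}.
\]
First I would decompose the integration domain as $[0,T+1]^k=[0,T]^k\sqcup\bigsqcup_{\emptyset\neq P\subseteq\{1,\cdots,k\}} R_P$, where on the slab $R_P$ the coordinates indexed by $P$ range over the short-path interval $[T,T+1]$ and the remaining ones range over $[0,T]$. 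On $[0,T]^k$ all $k$ points lie on the orbit, so $(\phi\cup\sigma)=\phi$ there, and after dividing by $T^k$ this piece is precisely the average defining $\lambda_D(x)$ in \eqref{eq:lambda_D}, which converges almost everywhere by the Key Lemma. It therefore remains to show that each slab contributes $o(T^k)$.

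Fix a nonempty $P$ with $|P|=p$. Passing to the blowup and invoking Lemma \ref{lem:A-estimate}, I would bound the slab integral by
\[
 \Bigl|\int_{R_P} f_{D,X}\,d\mathbf{t}\Bigr|=\Bigl|\int_{\widetilde{R_P}} \varpi_D\Bigr|\leq A_D\,\operatorname{vol}(\widetilde{R_P}),
\]
where $\widetilde{R_P}$ is the lift to $C[k;\R^3]$ of the image of $R_P$ under the configuration map, and $A_D$ is the $C^0$--norm of the smooth form $\varpi_D$ measured in the metric $\widetilde{g}$ (finite and independent of $T$ and $x$, since $\varpi_D$ is smooth on the compact $C[k;\R^3]$). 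The point is that $\varpi_D$ remains smooth even where orbit points collide with short-path points or with each other; all such collisions have been resolved by the blowups, so the only quantity that can still grow with $T$ is the $\widetilde{g}$--volume of $\widetilde{R_P}$.

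The heart of the argument is the estimate $\operatorname{vol}(\widetilde{R_P})=O(T^{k-p})$, uniformly in $x$. To obtain it I would tile each of the $k-p$ ``long'' parameter directions $[0,T]$ into $\sim T/\epsilon$ subintervals of flow-box size $\epsilon$, and cover the short path (of uniformly bounded length) by finitely many flow-box charts for the $p$ ``short'' directions. Each resulting small parameter cube is of exactly the type estimated in the Key Lemma: its lift has $\widetilde{g}$--volume bounded by a constant depending only on $\widetilde{g}$, the short-path length bound, and the $C^1$--norms of $\phi$ and $\sigma$, but not on $T$ nor on the particular flow box visited. Summing over the $O((T/\epsilon)^{k-p})$ cubes yields $\operatorname{vol}(\widetilde{R_P})\leq C_P\,T^{k-p}$. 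Consequently $\frac{1}{T^k}\bigl|\int_{R_P} f_{D,X}\,d\mathbf{t}\bigr|\leq A_D C_P\,T^{-p}\to 0$, and summing over the finitely many nonempty $P$ shows that the total short-path contribution to $\frac{1}{T^k}I_D(\bar{\mathscr{O}}_T(x))$ vanishes as $T\to\infty$, uniformly in $x$. Combined with the a.e.\ convergence of the main-cube average, this yields $\bar{\lambda}_D(x)=\lambda_D(x)$ wherever $\lambda_D$ exists, i.e.\ almost everywhere.

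The step I expect to be the main obstacle is precisely this uniform volume bound for the mixed slabs: one must verify that the Key Lemma's blowup analysis of lifted volumes applies verbatim when some of the $k$ points travel along short paths rather than along orbits of $X$, and in particular that orbit points approaching short-path points arbitrarily closely as $T\to\infty$ are tamed by the same diagonal blowups. Granting that the short paths are smooth curves of uniformly bounded length and bounded speed, so that contraction of $\varpi_D$ with their velocity fields is controlled exactly as contraction with $X$, the per-cube bound and the counting of cubes go through as above.
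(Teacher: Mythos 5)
Your proposal is correct and follows essentially the same route as the paper: the paper likewise splits the difference $\bar{\lambda}_D-\lambda_D$ into terms indexed by which of the $k$ coordinates lie on short paths (your slabs $R_P$ are its $J_{i,j}$ with $|P|=i$, up to permutation), subdivides each long and short parameter interval into $\epsilon$--pieces fitting flowbox charts, bounds each small-cube integral by a constant via the lifted-volume estimate \eqref{eq:F-vol-est} from Lemma \ref{lem:A-estimate}, and counts $(\lceil 1/\epsilon\rceil)^i(\lceil T/\epsilon\rceil)^{k-i}$ cubes to conclude each mixed term is $O(T^{k-i})=o(T^k)$. The step you flag as the main obstacle is resolved in the paper exactly as you anticipate, by expressing each cube integral as an integral of the smooth form $\varpi_D$ over the lift of a product of short orbit pieces and short-path pieces of uniformly bounded length.
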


\begin{proof}
We will adapt  the classical argument of Arnold from \cite{Arnold86}.\footnote{One needs to be cautious about this 
argument in the case the domain of the vector field cannot be covered by finitely many flowbox type neighborhoods; see Example 4 in \cite{Vogel03}.} Recall that 
the short curves are denoted by $\{\sigma(x,y)\}_{x,y\in \mathcal S}$.  
The difference of integrals 
in the limits defining $\lambda_D$ \eqref{eq:lambda_D} and $\bar{\lambda}_D$ \eqref{eq:bar-lambda_D} respectively is a sum of 
the following terms (up to permutation of $\int^T_0$ and $\int^1_0$) for $i\geq 1$, and $i+j=k$:
\[
 J_{i,j}=\underbrace{\int^{1}_0\cdots\int^{1}_0}_i\ \underbrace{\int^{T}_0\cdots \int^{T}_0}_j f_{D,X} (\sigma_{x,T}(s_1),\cdots, \sigma_{x,T}(s_i),\phi(x,t_1)\cdots, \phi(x,t_j))\,d\mathbf{t}\,d\mathbf{s},
\]
where $\sigma_{x,T}:[0,1]\to \mathcal S$ is a parametrization of $\sigma(x,\phi(x,T))$. Here $\int^T_0$ is an integral over the orbit of $X$ and $\int^1_0$ is an integral over the short path segment, from \eqref{eq:bar-lambda_D2}.
Fixing small enough $\epsilon>0$, we may subdivide each $[0,T]$ so that the integral $\int^T_0$  is roughly the sum $\int^\epsilon_0+\int^{2\epsilon}_\epsilon+\cdots+\int^{\epsilon\lceil \frac{1}{\epsilon} T\rceil}_{\epsilon(\lceil \frac{1}{\epsilon} T\rceil-1)}$, and each $\epsilon$--interval $[\epsilon\,(k-1),\epsilon\,k]$, $1\leq k\leq \lceil \frac{T}{\epsilon}\rceil$, parametrizes a piece of an orbit within a flowbox chart of $X$. Analogously, we may subdivide the unit intervals parametrizing the short paths and therefore split the $\int^1_0$ integral into the $\epsilon$--pieces, also fitting into flowbox neighborhoods of $X$. Let the index $k_l$, $1\leq l\leq j$, enumerate the sums for the $\int^T_0$'s and the index $m_z$, $1\leq z\leq i$, enumerate the sums for $\int^1_0$'s. Then, the above formula for $J_{i,j}$ yields  
\[
 |J_{i,j}|\leq \sum_{\substack{m_1,\cdots,m_i\\ k_1,\cdots,k_j}} \Bigl|\int^{\epsilon m_1}_{\epsilon (m_1-1)}\cdots\int^{\epsilon m_i}_{\epsilon (m_i-1)}\ \int^{\epsilon k_{1}}_{\epsilon (k_{1}-1)}\cdots \int^{\epsilon k_{j}}_{\epsilon (k_{j}-1)} f_{D,X}\, d\mathbf{t}\,d\mathbf{s}\Bigr|.
\]
Each integral term in the above sum can be expressed, similarly as in \eqref{eq:F-int-w_D-lift}, as an integral over a lift of the product of the short $\epsilon$--pieces of $\sigma$'s and the orbits of $X$, over a smooth differential form $\varpi_D$ on $C[k;\R^3]$. Therefore, by the estimate \eqref{eq:F-vol-est} each integral in the above sum can be bounded above by a constant $A_{X,D}$ which only depends on the vector field, $\varpi_D$, and the metric. Since the number of terms in the sum $J_{i,j}$ is given by $(\lceil 1/\epsilon\rceil)^i (\lceil T/\epsilon\rceil)^{j}$ we obtain
\[
 |J_{i,j}|\leq A_{X,D}(\lceil 1/\epsilon\rceil)^i (\lceil T/\epsilon\rceil)^{j}.
\]
Since there are 
${k\choose i}$ terms of type $J_{i,j}$ in the difference $\bar{\lambda}_D-\lambda_D$ and $j=k-i$, we obtain, for any $x\in \mathcal S$,
\[ 
 |\bar{\lambda}_D(x)-\lambda_D(x)|\leq A_{X,D} \lim_{T\to\infty} \frac{1}{T^k} \sum^k_{i=1} {k\choose i} (\lceil 1/\epsilon\rceil)^i (\lceil T/\epsilon\rceil)^{k-i}=0.\qedhere
\]
\end{proof}

\subsection{Proofs of Theorems \ref{thm:main-knots}, Corollary \ref{cor:main-cor}, and Theorem \ref{thm:main-ergodic}}

Recall the statements of these three results from the Introduction.

\begin{proof}[Proof of Theorem \ref{thm:main-knots} and Corollary \ref{cor:main-cor}]
Part $(i)$ has already been  proven in Key Lemma.
For part $(ii)$, following Theorem \ref{thm:vassiliev}, any finite type $n$ invariant $V_W$ is a linear combination
of integrals $I_D$ of \eqref{eq:I_D(K)}. Specifically, for appropriate coefficients $a_D\in \R$ and $D_1=\includegraphics[scale=.07]{writhe2}$, we can express $V_W$  as  
\begin{equation}\label{eq:T_W-sum}
\begin{split}
 V_W(K) & =\sum^{2n}_{k=1} J_k(K)+b\,I_{D_1}(K),\quad\text{for}\quad J_k(K)=\sum_{D\in TD_n;\ k(D)=k} a_D\,I_D(K),\quad K\in\mathcal{K}.
\end{split}
\end{equation}
\no In order to observe the almost everywhere convergence in 
\[
\overline{\lambda}_{W}(x)=\lim_{T\to \infty} \frac{1}{T^{2n}} V_W(\bar{\mathscr{O}}_T(x)),
\]
we take the corresponding linear combination of $T^{2n}$--time averages of terms in \eqref{eq:T_W-sum}. Namely, we have 
\begin{equation}\label{eq:T_W-time=sum}
 \overline{\lambda}_{W}(x)= \sum^{2n}_{k=1} \lim_{T\to\infty} \frac{1}{T^{2n}} J_k(\bar{\mathscr{O}}_T(x))+b \lim_{T\to\infty} \frac{1}{T^{2n}} I_{D_1}(\bar{\mathscr{O}}_T(x)).
\end{equation}
 By Key Lemma, for $n>1$, the terms in the sum \eqref{eq:T_W-time=sum} with $k<n$ vanish in the limit as does the $I_{D_1}$ term.  As a result, we have 
\[
\overline{\lambda}_{W}(x)=\overline{\lambda}_{W^{n}}(x)=\sum_{D\in TD_n; k
(D)=2n} a_D\,\overline{\lambda}_{D}(x).
\]  
Further, if $J_{2n}(\bar{\mathscr{O}}_T(x))$ is $o(T^{2n-1})$, then we may consider $T^{2n-1}$--time averages of 
$V_W$ and obtain
\[
\overline{\lambda}_{W}(x)=\overline{\lambda}_{W^{2n-1}}(x)=\sum_{D\in TD_n;\ k(D)=2n-1} a_D\,\overline{\lambda}_{D}(x).
\]
This reasoning further applies, if the terms $J_k$ are of lower order, and this therefore gives the proof of Corollary \ref{cor:main-cor}. 

It remains to prove invariance under volume-preserving deformations as claimed in  $(iii)$. Note that, given $h\in\text{Diff}_0(\R^3,\mu)$, the short path system $h\Sigma=\{h\circ\sigma\}$ on $\mathcal{S}'=h(\mathcal{S})$ obtained from $\Sigma=\{\sigma\}$ has the same properties as the original system $\Sigma$ on $\mathcal{S}$ with respect to the pulled-back metric on $\mathcal{S}'$. In particular, Lemma \ref{lem:short-paths} holds for $h\Sigma$. Now, for any $T>0$ and $x\in\mathcal{S}$, consider knots $K_{h_\ast X}=\bar{\mathscr{O}}^{h_\ast X}_{T}(x)$ and  $K_{X}=\bar{\mathscr{O}}^{X}_{T}(h^{-1}(x))$ (where we used $h\Sigma$ to close up $K_{h_\ast X}$ and $\Sigma$ to close up $K_{X}$). By \eqref{eq:flow-conj}, we have
\[
 K_{h_\ast X}=h(K_{X}).
\]
Since $h\in \text{Diff}_0(\R^3,\mu)$, we conclude that $K_{h_\ast X}$ and $K_X$ are isotopic, implying 
\[
 V_W(K_{h_\ast X})=V_W(K_{X}).
\] 
Taking $\lim_{T\to\infty} \frac{1}{T^{2n}} \bigl(\ \cdot\ \bigr)$ of both sides in the above equation yields
\[
 \overline{\lambda}^{h_\ast X}_W(x)=\overline{\lambda}^{X}_W(h^{-1}(x)),\qquad a.e.
\]
After a change of variables (using the fact that $h$ is $\mu$-preserving), we obtain 
\[
\mathscr{V}_W(h_\ast X)=\mathscr{V}_W(X). \qedhere
\]
\end{proof}
From the above argument, observe that 
$\lambda_{W^k}(x)$ is a time average of the $L^1$--functions
\[
 f_{W,X,k}:C(2n;\R^3)\longrightarrow \R,\qquad f_{W,X,k}:=\sum_{D\in TD_n; k(D)=k} a_D\,f_{D,X,k},
\] 
for $f_{D,X,k}=f_{D,X}$ as defined in \eqref{eq:f_D,X}. Applying the Multiparameter Ergodic Theorem \cite{Becker81, Tempelman67} to $f_{W,X,k}$, we obtain the following formula 
for the vector field invariant $\mathscr{V}_{W,k}:\operatorname{Vect}(\mathcal{S},\mu)\longrightarrow \R$: 
\begin{equation}\label{eq:higher-helicity}
 \mathscr{V}_{W,k}(X)=\int_{\mathcal{S}^{k}} f_{W,X,k}\, \bar{\mu}_\Delta
\end{equation}
(recall $\bar{\mu}_\Delta$  is a diagonal invariant measure on $\mathcal{S}^{k}$ given in
\eqref{eq:bar-mu-delta}). 
\begin{figure}[ht]
\begin{center}
\includegraphics[scale=0.5]{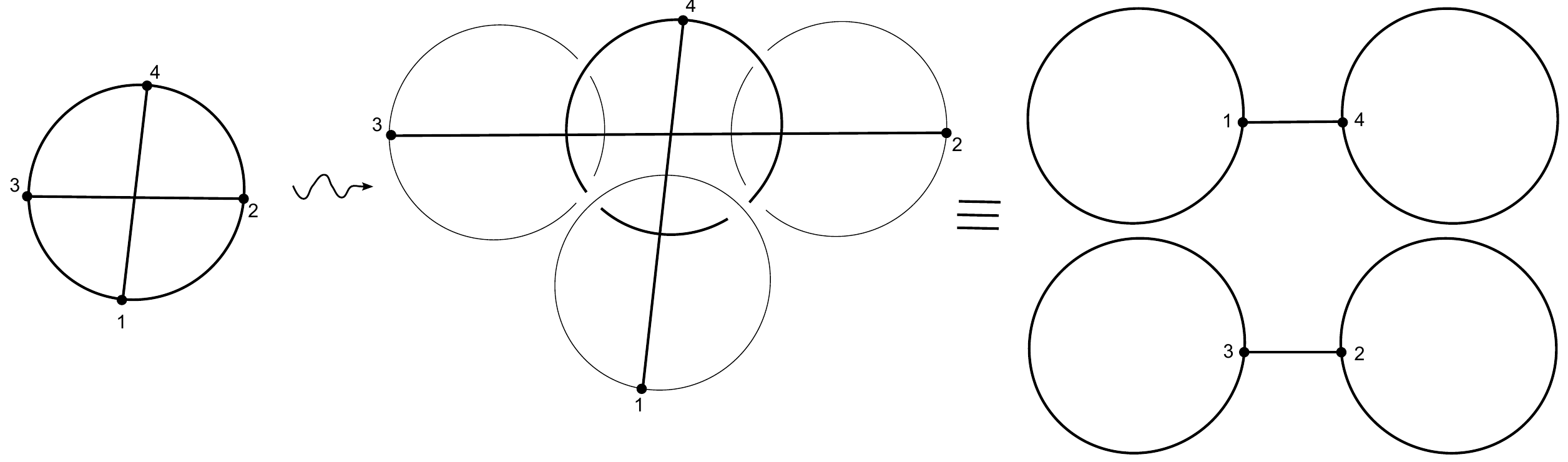}
\caption{ {\small A top degree diagram perturbation leads to pairwise linking number diagrams.}}
\label{fig:diag-p}
\end{center}
\end{figure} 
\begin{proof}[Proof of Theorem \ref{thm:main-ergodic}] 
By assumption, the domain $\mathcal{S}$ is equipped with the standard volume form $\mu$ and $X$ is an
ergodic $\mu$--preserving nonvanishing vector field. For simplicity, we assume that $\mu$ induces a probability measure on $\mathcal{S}$. Ergodicity of $X$ on $\mathcal{S}$ implies, among other things, that almost every orbit of $X$ densely fills the interior of $\mathcal{S}$.  Clearly, $\mu$ induces a $\phi^k_X$--invariant ergodic probability measure on $\mathcal{S}^{k}$ via the $3k$--form $\mu^k=\underbracket[0.5pt]{\mu\times\cdots\times\mu}_{k\text{ times}}$. 
 By Key Lemma, $f_{W,X}$ is in $L^1(\mu^{2n})$, and thus the ergodicity of the $\phi^{2n}_X$--action implies that the integral 
 \[
 \int_{\mathcal{S}^{2n}} f_{W,X} \mu^{2n}
 \]
equals
\begin{equation}\label{eq:ergodic-time-avg}
 \lim_{T\to \infty} \frac{1}{T^{2n}} \underbracket[0.5pt]{\int^{T}_0\cdots\int^{T}_0}_{2n\text{ times}} f_{W^{2n},X}
(\phi(x_1,t_1),\cdots, \phi(x_{2n},t_{2n}))\,dt_1\cdots dt_{2n},
\end{equation}
for almost every point $\mathbf{x}=(x_1,\cdots,x_{2n})\in \mathcal{S}^{2n}$.
Choosing $\mathbf{x}$ to be away from the thick diagonal, we have $2n$ distinct orbits $\bar{\mathscr{O}}_T(\mathbf{x})=\bar{\mathscr{O}}_T(x_1)\times\cdots\times \bar{\mathscr{O}}_T(x_{2n})$ through each coordinate point. For each top degree diagram (i.e.~a chord diagram) $D\in \mathcal{D}_n$, $k(D)=2n$, the integral of the associated differential form $\varpi_D$ over $\bar{\mathscr{O}}_T(\mathbf{x})$
splits as a product of linking numbers of pairs of points associated with the chords of $D$. This can be thought of as a perturbation of the diagram, as the vertices are no longer on the same orbit; see Figure \ref{fig:diag-p} for an illustration. Explicitly, for $\bar{\mathscr{O}}_T(\mathbf{x})$ and $\varpi_D=\prod_{(i,j)\in \mathcal{E}(D)} \omega_{i,j}$, from \eqref{eq:f_D,X} and the fact that $\int_{\bar{\mathscr{O}}_T(x_i)\times \bar{\mathscr{O}}_T(x_j)} \omega_{i,j}=\text{lk}(\bar{\mathscr{O}}_T(x_i),\bar{\mathscr{O}}_T(x_j))$, we have (up to short paths)
\begin{equation}\label{eq:int-f_D-lk}
\underbracket[0.5pt]{\int^{T}_0\cdots\int^{T}_0}_{2n\text{ times}} f_{D,X}
(\phi(x_1,t_1),\cdots, \phi(x_{2n},t_{2n}))\,dt_1\cdots dt_{2n}=\prod_{(i,j)\in \mathcal{E}(D)} \text{lk}(\bar{\mathscr{O}}_T(x_i),\bar{\mathscr{O}}_T(x_j)).
\end{equation}
By definition of $\mathscr{H}(X)$ (see \eqref{eq:helicity}) and the ergodicity assumption,
summing up over all top order diagrams $D\in\mathcal{D}_n$, we obtain from \eqref{eq:ergodic-time-avg} the independence of the limit of short paths and from \eqref{eq:int-f_D-lk} we obtain
\begin{equation}\label{eq:c_W*helicity}
 \int_{\mathcal{S}^{2n}} f_{W,X} \mu^{2n}=c_{W} (\mathscr{H}(X))^n,
\end{equation}
where $c_{W}$ is a constant independent of $X$.

 Next, we turn to the proof of the identity in 
\eqref{eq:T_W=helicity}. Observe that in the space of probability measures $\mathcal{M}(\mathcal{S}^{2n})$, the diagonal measure $\mu_{\Delta}$ can be approximated by a sequence of probability measures supported on the $\delta$--tubular neighborhood $U_\delta=U_\delta(\Delta)$ of the thin diagonal $\Delta$ of 
$\mathcal{S}^{2n}$. These measures can be precisely defined as 
\[
 \nu^{2n}_\delta=\frac{\chi_\delta}{\text{vol}(U_\delta)} \,\nu^{2n},\qquad \chi_\delta(\mathbf{x})=\begin{cases}
  1, & \quad \mathbf{x}\in U_\delta,\\
  0, & \quad \mathbf{x}\not\in U_\delta.
 \end{cases}
\]
Since $\mu^{2n}_\delta\to \mu_{\Delta}$, $\delta\to 0$ in $\mathcal{M}(\mathcal{S}^{2n})$. Thanks to the weak compactness of $\mathcal{M}(\mathcal{S}^{2n})$, the sequence of the associated invariant measures $\overline{\mu}^{2n}_\delta$, built via the formula  \eqref{eq:bar-mu-delta}, converges to the diagonal invariant measure $\overline{\mu}_\Delta$ in $\mathcal{M}(\mathcal{S}^{2n})$.
From Key Lemma, for each $\delta$, $f_{W,X}$ is in $L^1(\overline{\mu}^{2n}_\delta)$.
Since the right hand side in \eqref{eq:ergodic-time-avg} is independent of the choice of $\mathbf{x}$ (as long as it is generic), for a given $\delta$ we may suppose $\mathbf{x}\in U_\delta$ and obtain from \eqref{eq:c_W*helicity} and the assumption of ergodicity  the identity
\[ 
\int_{\mathcal{S}^{2n}} f_{W,X} \overline{\mu}^{2n}_\delta=\lim_{T\to \infty} \frac{1}{T^{2n}} \underbracket[0.5pt]{\int^{T}_0\cdots\int^{T}_0}_{2n\text{ times}} f_{W,X}
(\mathbf{x},\mathbf{t})\,d\mathbf{t}=c_{W} (\mathscr{H}(X))^n.
\]
Since $\overline{\mu}^{2n}_\delta\to \overline{\mu}_{\Delta}$ in $\mathcal{M}(\mathcal{S}^{2n})$, we deduce \eqref{eq:T_W=helicity}. The second part of Theorem \ref{thm:main-ergodic} can be justified analogously.
\end{proof}

\section{Quadratic helicity, energy, and proof of Theorem \ref{thm:q-helicity-energy}.}\label{sec:q-helicity}
The methods presented in the previous sections can be applied almost without any changes to the setting of asymptotic links. One difference between the case of knots and links is a choice of the diagonal invariant measure $\overline{\mu}_\Delta$
in \eqref{eq:bar-mu-delta}. Rather than presenting this obvious generalization, the rest of this section is devoted to an illustration of the relevant constructions for the simplest finite type $2$ invariant associated with a $2$--component link, the square of the linking number $\operatorname{lk}^2$. We observe that in the setting of asymptotic links, $\operatorname{lk}^2$ leads to {\em quadratic helicity} that was recently proposed by Akhmetiev in \cite{Akhmetev12}. Further,  it is the simplest invariant that can provide a sharper lower bound for the fluid energy than $\mathscr{H}(X)$, as claimed in Theorem \ref{thm:q-helicity-energy}.

The weight system associated to $\operatorname{lk}^2$ is given by just one trivalent diagram which we denote by $D_{\operatorname{lk}^2}$, pictured in Figure \ref{fig:q-linking}. 
 \begin{figure}[htbp]
\begin{center}
 \includegraphics[scale=.8]{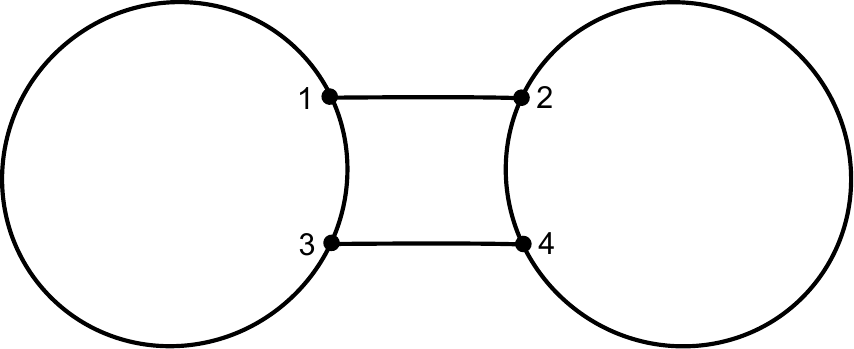}
\caption{A trivalent diagram $D_{\operatorname{lk}^2}$ for $\operatorname{lk}^2$.}
\label{fig:q-linking}
\end{center}
\end{figure}  
The configuration of points and chords on $D_{\operatorname{lk}^2}$ implies a choice of the invariant measure on $\mathcal{S}^4$ associated with the flow of $X$.
Namely, we start with the product $\phi^2_X=\phi_X\times\phi_X$--invariant measure $\mu\times\mu$ on $\mathcal{S}\times\mathcal{S}$ and push it forward to the $4$--fold product $\mathcal{S}^4$ by the inclusion $j\colon(x,y)\longmapsto (x,x,y,y)$. Let us denote the diagonal, parametrized by $j$, by $\Delta_{(2)}=\Delta_{\{\{1,3\},\{2,4\}\}}$. Also denote the pushforward measure by $\mu_{\Delta_{(2)}}$ and the associated $\phi^4_X$--invariant measure by $\overline{\mu}_{\Delta_{(2)}}$ (i.e.~$\overline{\mu}_{\Delta_{(2)}}=\overline{\int}\mu_{\Delta_{(2)}}$).
By virtue of Theorem \ref{thm:main-knots}, the asymptotic invariant of $X$ associated with $\operatorname{lk}^2$ equals the quadratic helicity of \cite{Akhmetev12} and is by \eqref{eq:higher-helicity} given as 
\begin{equation}\label{eq:q-helicity}
 \mathscr{H}^2(X)=\int_{\mathcal{S}^{4}} \varpi_{D_{\operatorname{lk}^2}}(X,X,X,X)\,\overline{\mu}_{\Delta_{(2)}},
\end{equation}
where 
\[
\varpi_{D_{\operatorname{lk}^2}}=\alpha^\ast\omega_{1,2}\wedge\alpha^\ast\omega_{3,4}.
\]
(because $D_{\operatorname{lk}^2}$ has no
free vertices).
 Observe that $\mathscr{H}^2(X)\geq 0$, whereas $\mathscr{H}(X)$ can be negative. We can easily show examples when $\mathscr{H}(X)=0$ but $\mathscr{H}^2(X)>0$ (see \cite[p.~344]{Arnold86}). Therefore it is of  general interest to derive an analog of inequality \eqref{eq:E-3/2-helicity-bound} for $\mathscr{H}^2(X)$.  

\begin{proof}[Proof of Theorem \ref{thm:q-helicity-energy}]
Recall \cite{Candel-Conlon00} that the diagonal invariant measure $\overline{\mu}_{\Delta_{(2)}}$ can be arbitrarily well approximated in $\mathcal{M}(\mathcal{S}^4)$ by positive finite linear combinations 
\begin{equation}\label{eq:bar-mu-n}
 \overline{\mu}_n=\sum^n_{i=1} a_i\,\overline{\mu}_{{\bf x}_i},\qquad a_i>0,
\end{equation}
where $\overline{\mu}_{{\bf x}_i}$ is a $\phi^4_X$--invariant measure obtained from averaging a Dirac delta $\delta_{{\bf x}_i}$ supported at a point ${\bf x}_i=(x_i,x_i,y_i,y_i)$ on the diagonal $\Delta_{(2)}$. More precisely, if
$\mu_n=\sum^n_{i=1} a_i\,\delta_{{\bf x}_i}$ as an approximation of $\mu_{\Delta_{(2)}}$, $\overline{\mu}_n=\overline{\int}\mu_n$ is an approximation of $\overline{\mu}_{\Delta_{(2)}}$. In fact, approximating $\mu\times\mu$ by 
$\sum^n_{i=1} b_i\,\delta_{(x_i,y_i)}$, $b_i>0$, and applying the pushforward under $j$ we conclude that the coefficients in \eqref{eq:bar-mu-n} are given as 
\[
 a_i=b_i^2.
\]
Note that each $\overline{\mu}_{{\bf x}_i}$ is a product measure, i.e. 
\begin{equation}\label{eq:bar-mu_i-prod}
 \overline{\mu}_{{\bf x}_i}=\overline{\mu}^{\{1,2\}}_{(x_i,y_i)}\times\overline{\mu}^{\{3,4\}}_{(x_i,y_i)},
\end{equation}
where $\overline{\mu}^{\{k,l\}}_{(x_i,y_i)}$ is a pushforward of $\overline{\mu}_{(x_i,y_i)}=\overline{\int} \delta_{(x_i,y_i)}$ under the inclusion 
of $\mathcal{S}\times \mathcal{S}$ into the $(k, l)$-coordinates factor of $(\mathcal{S}\times \mathcal{S})^2=\mathcal{S}^4$.
By the proof of Theorem \ref{thm:main-knots}, the function $f_W=\alpha^\ast\omega_{1,2}\wedge\alpha^\ast\omega_{3,4}(X^{\wedge^4})$ is $\overline{\mu}_n$--integrable for each $n$. Moreover, if we set 
\[
 f_{1,2}=\alpha^\ast\omega_{1,2}(X,X), \quad  f_{3,4}=\alpha^\ast\omega_{3,4}(X,X),
\]
then
\[
 f_W=\alpha^\ast\omega_{1,2}\wedge\alpha^\ast\omega_{3,4}(X^{\wedge^4})  =\alpha^\ast\omega_{1,2}(X,X)\,\alpha^\ast\omega_{3,4}(X,X)=f_{1,2}\,f_{3,4}.
\]
Note that the functions $f_{1,2}$ and $f_{3,4}$ are constant on appropriate $\mathcal{S}^2$ factors of $\mathcal{S}^4$.
Using \eqref{eq:bar-mu-n} and \eqref{eq:bar-mu_i-prod}, we obtain
\begin{align*}
 \Bigl|\int_{\mathcal{S}^4} f_W\,\overline{\mu}_n\Bigr| & =\Bigl|\sum^n_{i=1} b^2_i\Bigl(\int_{\mathcal{S}^2} f_{1,2}\,\overline{\mu}^{\{1,2\}}_{(x_i,y_i)}\Bigr)\Bigl(\int_{\mathcal{S}^2} f_{3,4}\,\overline{\mu}^{\{3,4\}}_{(x_i,y_i)}\Bigr)\Bigl|
 \end{align*}
\begin{align*}
 & \leq \Bigl(\sum^n_{i=1} b_i\int_{\mathcal{S}^2} |f_{1,2}|\,\overline{\mu}^{\{1,2\}}_{(x_i,y_i)}\Bigr)\Bigl(\sum^n_{i=1} b_i\int_{\mathcal{S}^2} |f_{3,4}|\,\overline{\mu}^{\{3,4\}}_{(x_i,y_i)}\Bigr).
\end{align*}
Passing to the limit in $\mathcal{M}(\mathcal{S}^4)$ as $n\to \infty$, we have $\mu_n\to \overline{\mu}_{\Delta_{(2)}}$ and $\sum_i b_i\overline{\mu}^{\{k,l\}}_{(x_i,y_i)}\to \mu\times\mu$. Therefore  
\begin{equation}\label{eq:H^2-leq-cross}
\mathscr{H}^2(X) \leq \Bigl(\int_{\mathcal{S}^2} |\alpha^\ast\omega_{1,2}(X,X)|\,\mu\times\mu\Bigr)\Bigl(\int_{\mathcal{S}^2} |\alpha^\ast\omega_{3,4}(X,X)|\,\mu\times\mu\Bigr)=c(X)^2,
\end{equation}
where $c(X)$ stands for the asymptotic crossing number as given in \cite[p.~191]{Freedman-He91}. The estimate \cite[Equation (1.9)]{Freedman-He91}  
\[
 E_{3/2}(X)\geq \Bigl(\frac{16}{\pi}\Bigr)^{1/4} c(X)^{3/4}
\]
immediately yields the required bound in \eqref{eq:E_3/2-q-helicity}.
\end{proof}

\end{document}